\DeclareSymbolFont{cyrletters}{OT2}{wncyr}{m}{n}
\DeclareMathSymbol{\Sha}{\mathalpha}{cyrletters}{"58}
\newtheorem{theorem}{Theorem}[section]
\newtheorem{lemma}[theorem]{Lemma}
\newtheorem{corollary}[theorem]{Corollary}
\theoremstyle{definition}
\theoremstyle{remark}
\newtheorem{remark}[theorem]{Remark}
\numberwithin{equation}{section}
\begin{document}
\setcounter{page}{1}

\title[Quantum dynamics of elliptic curves]{Quantum dynamics of elliptic curves}

\author[Nikolaev]
{Igor V. Nikolaev$^1$}

\address{$^{1}$ Department of Mathematics and Computer Science, St.~John's University, 8000 Utopia Parkway,  
New York,  NY 11439, United States.}
\email{\textcolor[rgb]{0.00,0.00,0.84}{igor.v.nikolaev@gmail.com}}

\dedicatory{All data are part of the manuscript}

\subjclass[2010]{Primary 11G05; Secondary 46L85.}

\keywords{elliptic curves, Shafarevich-Tate group, real multiplication.}


\begin{abstract}
We calculate $K$-theory   of a  crossed product $C^*$-algebra 
 of  the  noncommutative torus with real multiplication  by  elliptic curve  $\mathscr{E}(K)$
 over a number field $K$.    This result is used  to evaluate  the rank and  
 the Shafarevich-Tate group of $\mathscr{E}(K)$. 
 \end{abstract}

\maketitle

\section{Introduction}
Let  $\mathscr{A}_{\theta}$  be a  noncommutative torus, i.e.  
the $C^*$-algebra on the  generators
$u$ and $v$ satisfying the relation $vu=e^{2\pi i\theta}uv$ for a real constant $\theta$
[Rieffel 1990] \cite{Rie1}. Recall that  $\mathscr{A}_{\theta}$ is said to have real multiplication (RM)  if  $\theta$ is an
irrational quadratic  number  [Manin 2004] \cite{Man1};  we  denote such a $C^*$-algebra by $\mathscr{A}_{RM}$.
Let $K$ be a number field and let $\mathscr{E}(K)$ be an elliptic curve over $K$.  
Here we  consider   a  functor $F$ between elliptic  curves $\mathscr{E}(K)$ and 
 the $C^*$-algebras $\mathscr{A}_{RM}$  \cite[Section 1.3]{N}. Such a functor 
 maps $K$-isomorphic elliptic curves $\mathscr{E}(K)$  to 
  isomorphic $C^*$-algebras $\mathscr{A}_{RM}$.

Recall that $\mathscr{E}(K)$ is  an algebraic group over $K$.
The Mordell-Weil Theorem says that 
$\mathscr{E}(K)\cong \mathbf{Z}^r\oplus \mathscr{E}_{tors}(K)$, 
where $r=rk~\mathscr{E}(K)\ge 0$ is the rank of $\mathscr{E}(K)$ and $\mathscr{E}_{tors}(K)$
is a finite abelian group. 
The group operation $\mathscr{E}(K)\times \mathscr{E}(K)\to \mathscr{E}(K)$ 
defines an action of  the group  $\mathscr{E}(K)$  by the $K$-automorphisms
of $\mathscr{E}(K)$.  Likewise each $K$-automorphism of $\mathscr{E}(K)$   gives rise  
to an automorphism of  $\mathscr{A}_{RM}$.  
Thus  one gets an action of the group  $\mathscr{E}(K)$  by  automorphisms 
of the $C^*$-algebra $\mathscr{A}_{RM}$.  
We study   a crossed product
 corresponding to  this action,  i.e. the  $C^*$-algebra:
\begin{equation}\label{eq1.1}
\mathscr{A}_{RM}\rtimes \mathscr{E}(K).
\end{equation}
The crossed product (\ref{eq1.1})   is an example  the quantum dynamical system
[Blackadar 1986] \cite[Chapter V]{B} and  [Pedersen 1979] \cite{P};   hence the title. 
The aim of our note is  the $K$-theory of crossed product 
 (\ref{eq1.1}) (Theorem \ref{thm1.1}). 
 We apply Theorem \ref{thm1.1}  to  the  rank and  the Shafarevich-Tate group of 
  $\mathscr{E}(K)$ (Corollary \ref{cor1.2}). 
  We shall use the following notation.

  Recall that $K_0(\mathscr{A}_{\theta})\cong\mathbf{Z}^2$ [Rieffel 1990] \cite{Rie1}.
 Let  $\tau$  be a  tracial state on the  crossed product  $\mathscr{A}_{RM}\rtimes \mathscr{E}(K)$ 
 [Phillips 2005] \cite[Theorem 3.4]{Phi1}.
 It is known that  $\tau$ defines an embedding
 $K_0(\mathscr{A}_{\theta})\hookrightarrow \mathbf{R}$ given by the formula
 $\tau(K_0(\mathscr{A}_{\theta}))=\mathbf{Z}+\theta\mathbf{Z}\subset\mathbf{R}$  [Blackadar 1986]  \cite[Exercise 10.11.6]{B};
 the latter will be called a  pseudo-lattice [Manin 2004] \cite{Man1}.  
 Let  $\Lambda$ be a ring of endomorphisms of  the  pseudo-lattice  
$\tau(K_0(\mathscr{A}_{RM}))$. Such a ring is an order  $\Lambda\cong \mathbf{Z}+fO_k$ in the 
field $k=\mathbf{Q}(\theta)$, where $O_k$ is the ring of integers  of $k$ and $f\ge 1$ is a conductor of the order.  
We  denote by  $Cl~(\Lambda)$  the class group of the ring $\Lambda$
and by  $h_{\Lambda}=|Cl~(\Lambda)|$  the class number of 
$\Lambda$.  Moreover, let  $\mathcal{K}_{ab}$ 
the maximal abelian extension of the field $k$ modulo conductor $f$.  
(If $f=1$, then the extension  $\mathcal{K}_{ab}$ is  unramified, i.e. the Hilbert
class field of $k$.)   It is known,  that $Gal~(\mathcal{K}_{ab}|k)\cong  Cl~(\Lambda)$,
where   $Gal~(\mathcal{K}_{ab}|k)$ is the Galois group of the extension $k\subseteq  \mathcal{K}_{ab}$. 
 Let $\{\alpha_i ~|~1\le i\le h_{\Lambda}\}$ be generators 
of  the field $\mathcal{K}_{ab}$,  such that  $\alpha_i$ are conjugate algebraic numbers.
Consider a  normalization of $\alpha_i$ given by the formula
$\{\lambda_i=\alpha_i \alpha^{-1}_{h_{\Lambda}} ~|~1\le i\le h_{\Lambda}-1\}$.
 Our main results can be formulated as follows. 
\begin{theorem}\label{thm1.1}
The $K$-theory of  crossed product (\ref{eq1.1}) is as follows:
\begin{equation}\label{eq1.2}
\left\{
\begin{array}{lll}
K_0(\mathscr{A}_{RM}\rtimes \mathscr{E}(K)) &\cong & \mathbf{Z}^{h_{\Lambda}+1},\\
&&\\
\tau(K_0(\mathscr{A}_{RM}\rtimes \mathscr{E}(K))) &=& \mathbf{Z}+\theta\mathbf{Z}+\lambda_1\mathbf{Z}+\dots+
\lambda_{h_{\Lambda}-1}\mathbf{Z}.
\end{array}
\right.
\end{equation}
\end{theorem}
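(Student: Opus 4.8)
The plan is to compute the $K$-theory in two steps that reflect the Mordell--Weil splitting $\mathscr{E}(K)\cong\mathbf{Z}^r\oplus\mathscr{E}_{tors}(K)$, and then to identify the outcome with the arithmetic of the order $\Lambda$. The first thing to settle is the action on $K$-theory: a point $P\in\mathscr{E}(K)$ determines a $K$-automorphism of $\mathscr{E}(K)$, hence by functoriality of $F$ an automorphism $\sigma_P$ of $\mathscr{A}_{RM}$, and the induced automorphism $(\sigma_P)_\ast$ of $K_0(\mathscr{A}_{RM})\cong\mathbf{Z}^2$ preserves the order. Transporting along the trace embedding $\tau(K_0(\mathscr{A}_{RM}))=\mathbf{Z}+\theta\mathbf{Z}$, one sees that $(\sigma_P)_\ast$ acts as multiplication by a totally positive unit of $\Lambda$, so that $P\mapsto(\sigma_P)_\ast$ is a homomorphism from $\mathscr{E}(K)$ into the (rank-one) group of totally positive units of $\Lambda$; the kernels and cokernels of the operators $1-(\sigma_P)_\ast$ are what will carry the class-number data.

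Next, write $\mathscr{B}=\mathscr{A}_{RM}\rtimes\mathscr{E}_{tors}(K)$. Since $\mathscr{E}_{tors}(K)$ is finite abelian, I would compute $K_\ast(\mathscr{B})$ through the Green--Julg isomorphism with the equivariant $K$-theory $K_\ast^{\mathscr{E}_{tors}(K)}(\mathscr{A}_{RM})$, decomposed over the finitely many characters of $\mathscr{E}_{tors}(K)$; the canonical tracial state descends compatibly with this decomposition. This is the step in which class field theory enters: the classical parametrization of the Morita classes of real-multiplication noncommutative tori by the ideal classes of $\Lambda$, together with $Gal~(\mathcal{K}_{ab}|k)\cong Cl~(\Lambda)$, forces the isotypic summands to be indexed by the $h_{\Lambda}$ conjugate generators $\alpha_1,\dots,\alpha_{h_{\Lambda}}$ of $\mathcal{K}_{ab}$. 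One then has $\mathscr{A}_{RM}\rtimes\mathscr{E}(K)\cong\mathscr{B}\rtimes\mathbf{Z}^r$, and $r$ successive applications of the Pimsner--Voiculescu six-term exact sequence --- at each stage computing the kernel and cokernel of $1-(\sigma_j)_\ast$ for the $j$-th generator of $\mathbf{Z}^r$, using the first step --- are collated to give $K_0(\mathscr{A}_{RM}\rtimes\mathscr{E}(K))\cong\mathbf{Z}^{h_{\Lambda}+1}$.

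It remains to evaluate $\tau$ on the generators of $K_0$. Because the trace is natural for both the Green--Julg and the Pimsner--Voiculescu descriptions, the classes of $1$ and of a Rieffel projection in $\mathscr{A}_{RM}$ of trace $\theta$ still map to $1$ and $\theta$, while each of the remaining $h_{\Lambda}-1$ generators produced above is represented by a projection whose trace, read off from the explicit generator formulas, equals the normalized conjugate $\alpha_i\alpha_{h_{\Lambda}}^{-1}=\lambda_i$. Therefore $\tau(K_0(\mathscr{A}_{RM}\rtimes\mathscr{E}(K)))=\mathbf{Z}+\theta\mathbf{Z}+\lambda_1\mathbf{Z}+\dots+\lambda_{h_{\Lambda}-1}\mathbf{Z}$, as claimed.

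The main obstacle is the second step, and in particular making the answer independent of $r$: a bare application of Pimsner--Voiculescu produces a group whose rank a priori involves $r$, and one must show --- using the fine structure of the functor $F$, namely how it encodes the group law of $\mathscr{E}(K)$, together with the real-multiplication dictionary between noncommutative tori and orders in real quadratic fields --- that this rank collapses to exactly $h_{\Lambda}+1$ and that the new traces are the normalized conjugates $\lambda_i$ and nothing else. Once that identification is secured, the remaining exact-sequence and trace bookkeeping is routine.
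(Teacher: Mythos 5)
There is a genuine gap, and you have named it yourself: the entire content of the theorem is that the rank of $K_0(\mathscr{A}_{RM}\rtimes \mathscr{E}(K))$ equals $h_{\Lambda}+1$ \emph{independently of} $r=rk~\mathscr{E}(K)$, and that the extra traces are exactly the normalized conjugates $\lambda_i$. Your plan defers precisely this point (``one must show \dots that this rank collapses to exactly $h_{\Lambda}+1$''), offering only an appeal to ``the fine structure of the functor $F$'' and to the parametrization of Morita classes by $Cl~(\Lambda)$, with no mechanism connecting that class-field-theoretic input to the Green--Julg isotypic decomposition or to the Pimsner--Voiculescu boundary maps. Worse, the machinery you invoke points the other way: a translation automorphism of $\mathscr{E}(K)$ is homotopic to the identity, so the induced automorphism $(\sigma_P)_\ast$ of $K_0(\mathscr{A}_{RM})\cong\mathbf{Z}^2$ is trivial (it is not multiplication by a nontrivial totally positive unit of $\Lambda$, and $P\mapsto(\sigma_P)_\ast$ cannot be a faithful record of the Mordell--Weil group inside a rank-one unit group); then $1-(\sigma_j)_\ast=0$ at every stage, iterated Pimsner--Voiculescu gives $K_0\oplus K_1$ at each step, and the rank grows like $2^{r+1}$ rather than collapsing to $h_{\Lambda}+1$. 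Likewise the final trace evaluation (``read off from the explicit generator formulas'') is an assertion, not an argument: no projections with traces $\lambda_i$ are produced. So the proposal is a plausible-looking computation scheme whose decisive steps are exactly the ones left unproved.

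For comparison, the paper's proof does not decompose along Mordell--Weil at all in Theorem \ref{thm1.1} (that splitting only enters later, in the proof of Corollary \ref{cor1.2}). Instead it works directly with the trace: the embedding $\mathscr{A}_{RM}\hookrightarrow\mathscr{A}_{RM}\rtimes\mathscr{E}(K)$ gives an inclusion of pseudo-lattices $\mathbf{Z}+\theta\mathbf{Z}\subseteq\lambda_1\mathbf{Z}+\dots+\lambda_m\mathbf{Z}$; the generators are shown to be algebraic integers via the coefficient-ring argument of Borevich--Shafarevich (lemma \ref{lm3.1}); and the key step is a ``rigidity principle'': since $\mathscr{E}(K)=F^{-1}(\mathscr{A}_{RM})$, the crossed product depends only on $\mathscr{A}_{RM}$, so the endomorphism ring of the larger pseudo-lattice must be controlled by the arithmetic of $k$, forcing $k(\lambda_1,\dots,\lambda_m)$ to be the maximal abelian extension of $k$ modulo the conductor $f$ and hence $m=|Cl~(\Lambda)|=h_{\Lambda}$ (lemma \ref{lm3.2} and corollary \ref{cor3.3}). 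Whatever one thinks of that rigidity argument, it is the substitute for the collapse you could not supply; your route would only become a proof if you proved an analogous statement identifying the image of the trace on the crossed product with the ring of integers data of $\mathcal{K}_{ab}$, which is the heart of the matter.
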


\bigskip
The rank and the Shafarevich-Tate group [Silverman 1985]  \cite[Appendix B]{S} 
of   $\mathscr{E}(K)$ will be denoted by  $rk~\mathscr{E}(K)$
 and $\Sha (\mathscr{E}(K))$, respectively.  Theorem \ref{thm1.1} implies 
 the following formulas.  
\begin{corollary}\label{cor1.2}
\begin{equation}\label{eq1.3+}
\left\{
\begin{array}{lll}
rk~\mathscr{E}(K) &=& h_{\Lambda}-1,\\
&&\\
\Sha (\mathscr{E}(K)) &\cong&  ~Cl~(\Lambda)\oplus Cl~(\Lambda).
\end{array}
\right.
\end{equation}
\end{corollary}
\begin{remark}\label{rm1.4}
For  simplicity, we treat the case of elliptic curves only.  Notice that 
formulas (\ref{eq1.2}) and (\ref{eq1.3+}) are true for abelian 
varieties  over  number fields  $K$ \cite{Nik2}.
\end{remark}
\begin{remark}\label{rm1.5}
It follows from  (\ref{eq1.3+}) that 
\begin{equation}\label{eq1.3}
|\Sha(\mathscr{E}(K))|=(1+rk~\mathscr{E}(K))^2.
\end{equation}
Since the group $\Sha (\mathscr{E}(K))$ is hard to compute
 [Tate 1974] \cite[p.193]{Tat1}, a verification of  (\ref{eq1.3}) seems to be out of reach.  
 One can use analytic  values of  $rk~\mathscr{E}(K)$  
and  $|\Sha (\mathscr{E}(K))|$
assuming that the Birch and Swinnerton-Dyer Conjecture is true [Swinnerton-Dyer 1967]  \cite{Swi1}. 
While many of such values satisfy (\ref{eq1.3}),  the other do  not [Cremona et al.  2017] \cite{LMFDB}.  
The author is unaware of an exact  relation between the   
analytic values  and those  described by formula  (\ref{eq1.3}).   
\end{remark}
The article is organized as follows.  The preliminary facts are introduced in Section 2.  The 
proof of Theorem \ref{thm1.1} and Corollary \ref{cor1.2} can be found in Section 3.

\section{Preliminaries}
We briefly review  elementary facts about  $C^*$-algebras, class field theory for
 quadratic fields and the Shafarevich-Tate groups.
We refer the reader to   [Pedersen 1979] \cite{P}, 
 [Neukirch 1999] \cite{NE} and [Silverman 1985]  \cite{S} for a detailed account. 

\subsection{$C^*$-dynamical systems}
\subsubsection{$C^*$-algebras}
A  $C^*$-algebra $\mathscr{A}$ is an algebra  over $\mathbf{C}$ with a norm
$a\mapsto ||a||$ and an involution $a\mapsto a^*$ such that
it is complete with respect to the norm and $||ab||\le ||a||~ ||b||$
and $||a^*a||=||a||^2$ for all $a,b\in \mathscr{A}$.
Any commutative $C^*$-algebra is  isomorphic
to the algebra $C_0(X)$ of continuous complex-valued
functions on some locally compact Hausdorff space $X$; 
otherwise, $\mathscr{A}$ can be thought of as  a noncommutative  topological
space.

\subsubsection{$K$-theory of $C^*$-algebras}
For a unital $C^*$-algebra $\mathscr{A}$, let $V(\mathscr{A})$
be the union over $n$ of projections in the $n\times n$
matrix $C^*$-algebra with entries in $\mathscr{A}$;
projections $p,q\in V(\mathscr{A})$ are  equivalent if there exists a partial
isometry $u$ such that $p=u^*u$ and $q=uu^*$. The equivalence
class of projection $p$ is denoted by $[p]$;
the equivalence classes of orthogonal projections can be made to
a semigroup by putting $[p]+[q]=[p+q]$. The Grothendieck
completion of this semigroup to an abelian group is called
the  $K_0$-group of the algebra $\mathscr{A}$.
The functor $\mathscr{A}\to K_0(\mathscr{A})$ maps the category of unital
$C^*$-algebras into the category of abelian groups, so that
projections in the algebra $\mathscr{A}$ correspond to a positive
cone  $K_0^+\subset K_0(\mathscr{A})$ and the unit element $1\in \mathscr{A}$
corresponds to an order unit $u\in K_0(\mathscr{A})$.
The ordered abelian group $(K_0,K_0^+,u)$ with an order
unit  is called a dimension group;  an order-isomorphism
class of the latter we denote by $(G,G^+)$.

\subsubsection{Crossed products}
Let $\mathscr{A}$ be a $C^*$-algebra and $G$ a locally compact group.  
We shall consider a continuous homomorphism $\alpha$ from $G$ to the 
group $Aut~\mathscr{A}$ of $\ast$-automorphisms of $A$ endowed with the topology
of pointwise norm-convergence.  
Roughly speaking,  the idea of the crossed
product construction  is to embed $\mathscr{A}$ into a larger $C^*$-algebra 
in which the automorphism becomes the inner automorphism.  
A covariant representation of the 
triple $(\mathscr{A},G,\alpha)$ is a pair of representations $(\pi,\rho)$ of $\mathscr{A}$ and $G$ on the
same Hilbert space $\mathscr{H}$, such that
$\rho(g)\pi(a)\rho(g)^*=\pi(\alpha_g(a))$
for all $a\in \mathscr{A}$ and $g\in G$. Each covariant representation of 
$(\mathscr{A},G,\alpha)$ gives rise to a convolution algebra $C(G,\mathscr{A})$
of continuous functions from $G$ to $\mathscr{A}$;  the completion of
$C(G,\mathscr{A})$  in the norm topology is a $C^*$-algebra $\mathscr{A}\rtimes_{\alpha} G$
called a crossed product of $\mathscr{A}$ by $G$. If $\alpha$ is a single
automorphism of $\mathscr{A}$, one gets an action of $\mathbf{Z}$ on $\mathscr{A}$;  
the crossed product in this case is called simply the crossed product
of $\mathscr{A}$ by ${\alpha}$.  

\subsubsection{AF-algebras}
An  AF-algebra  (Approximately Finite $C^*$-algebra) is defined to
be the  norm closure of an ascending sequence of   finite dimensional
$C^*$-algebras $M_n$,  where  $M_n$ is the $C^*$-algebra of the $n\times n$ matrices
with entries in $\mathbf{C}$. Here the index $n=(n_1,\dots,n_k)$ represents
the  semi-simple matrix algebra $M_n=M_{n_1}\oplus\dots\oplus M_{n_k}$.
The ascending sequence mentioned above  can be written as 
$
M_1\buildrel\rm\varphi_1\over\longrightarrow M_2
   \buildrel\rm\varphi_2\over\longrightarrow\dots,
$
where $M_i$ are the finite dimensional $C^*$-algebras and
$\varphi_i$ the homomorphisms between such algebras.  
If $\mathbb{A}$ is an AF-algebra, then its dimension group
$(K_0(\mathbb{A}), K_0^+(\mathbb{A}), u)$ is a complete isomorphism
invariant of algebra $\mathbb{A}$.  
The order-isomorphism  class $(K_0(\mathbb{A}), K_0^+(\mathbb{A}))$
 is an invariant of the Morita equivalence of algebra 
$\mathbb{A}$,  i.e.  an isomorphism class in the category of 
finitely generated projective modules over $\mathbb{A}$.

\subsection{Abelian extensions of  quadratic fields}
Let $D$ be a square-free integer and let $k=\mathbf{Q}(\sqrt{D})$ be a quadratic number field,
i.e. an extension of degree two of the field of rationals.   Denote by $O_k$ the ring of
integers of  $k$ and by  $\Lambda$ an order in $O_k$, i.e. a finitely generated $\mathbf{Z}$-module which is 
a subring of the ring $O_k$
containing $1$.  The order $\Lambda$ can be written in the form 
$\Lambda=\mathbf{Z}+fO_k$, where the integer  $f\ge 1$ is a conductor of $\Lambda$. 
If $f=1$,  then $\Lambda\cong O_k$ is the maximal order.

Denote by $Cl~(\Lambda)$ the ideal class group and by $h_{\Lambda}=|Cl~(\Lambda)|$ 
the class number of the ring   $\Lambda$.  If $\Lambda\cong O_k$, then $h_{\Lambda}$
coincides with the class number $h$ of the field $k$. The integer $h\le h_{\Lambda}$ is always a divisor of 
$h_{\Lambda}$   given by the formula:
\begin{equation}\label{eq2.1}
h_{\Lambda}=h  ~{f \over e_f}\prod_{p|f}\left(1-\left({D\over p}\right){1\over p}\right),
\end{equation}
where  $e_f$ is the index of the group of units of  $\Lambda$ in the group of units of $O_k$, $p$ is a prime number and 
$\left({D\over p}\right)$ is the Legendre symbol.

Let $\mathcal{K}_{ab}$ be the maximal abelian extension of the field $k$ modulo conductor $f\ge 1$. 
The class field theory says that 
\begin{equation}\label{eq2.2}
Gal~(\mathcal{K}_{ab}|k)\cong Cl~(\Lambda),
\end{equation}
where $Gal~(\mathcal{K}_{ab}|k)$ is the Galois group of the extension $(\mathcal{K}_{ab}|k)$. 
The $\mathcal{K}_{ab}$ is  the Hilbert class field (i.e. a maximal unramified abelian extension)  
of $k$  if and only if $f=1$.

For $D<0$ an  explicit construction of generators  of the field  $\mathcal{K}_{ab}$ is  realized  
by elliptic curves with  complex multiplication,  see e.g.   [Neukirch 1999] \cite[Theorem 6.10]{NE}. 
For $D>0$ an  explicit construction of  generators  of the field  $\mathcal{K}_{ab}$ is  realized  by 
noncommutative tori  with real multiplication  \cite[Theorem 6.4.1]{N}.

\subsection{Shafarevich-Tate group of elliptic curve}
The Shafarevich-Tate group $\Sha (\mathscr{E}(K))$ is a measure of
failure of the Hasse principle for the elliptic curve $\mathscr{E}(K)$. 
Recall that if $\mathscr{E}(K)$ has  a $K$-rational point, then it has 
also a $K_v$-point for every completion $K_v$ of the number field $K$. 
A converse of this statement is called the Hasse principle. In general, the 
Hasse principle fails for the elliptic curve $\mathscr{E}(K)$. 

Denote by $H^1(K,\mathscr{E})$ the first Galois cohomology group 
of $\mathscr{E}(K)$ [Silverman 1985] \cite[Appendix B]{S}.  There exists
a natural homomorphism
\begin{equation}\label{eq2.3}
\omega: H^1(K,\mathscr{E})\rightarrow \prod_v H^1(K_v,\mathscr{E}),
\end{equation}
where $H^1(K_v,\mathscr{E})$ is the first Galois cohomology over the
field $K_v$.  The Shafarevich-Tate group  of 
an elliptic curve  $\mathscr{E}(K)$ is  
\begin{equation}\label{eq2.4}
\Sha (\mathscr{E}(K)):=Ker ~\omega. 
\end{equation}
The group $\Sha (\mathscr{E}(K)$ is trivial  if and only if 
elliptic curve $\mathscr{E}(K)$ satisfies the Hasse principle.
\begin{remark}
The Shafarevich-Tate group $\Sha (A(K))$ of an abelian variety $A(K)$ over 
the number field $K$ is defined similarly and has the same properties as 
$\Sha (\mathscr{E}(K))$. 
\end{remark}

\section{Proofs}
\subsection{Proof of theorem \ref{thm1.1}}
For the sake of clarity, let us outline the main ideas. 
Our proof uses the following  rigidity principle 
based on the class field theory for  the 
real quadratic fields.  Namely, the canonical embedding 
$\mathscr{A}_{RM}\hookrightarrow  \mathscr{A}_{RM}\rtimes \mathscr{E}(K)$ 
implies an inclusion $K_0(\mathscr{A}_{RM})\subseteq K_0(\mathscr{A}_{RM}\rtimes \mathscr{E}(K))$.  
Using the canonical tracial state $\tau$
\footnote{The existence of  $\tau$ follows from [Phillips 2005] \cite[Theorem 3.4]{Phi1}.}
on   $\mathscr{A}_{RM}\rtimes \mathscr{E}(K)$, 
one gets an inclusion:
 \begin{equation}\label{eq3.1}
 \mathbf{Z}+\theta\mathbf{Z}\subseteq \lambda_1\mathbf{Z}+\dots+\lambda_m\mathbf{Z},
 \end{equation}
where $\lambda_i$ are generators of the pseudo-lattice $\tau(K_0(\mathscr{A}_{RM}\rtimes \mathscr{E}(K)))$. 
It is easy to see, that  each $\lambda_i\in\mathbf{R}$ is an integer algebraic  number. 
But the crossed product (\ref{eq1.1}) depends solely on the algebra $\mathscr{A}_{RM}$, 
see formula (\ref{eq3.5}).  Therefore the extension (\ref{eq3.1}) satisfies  certain  rigidity condition.  
Specifically,  the arithmetic of the number field $k(\lambda_i)$ must be controlled 
by the arithmetic of the  field $k$.  It is  well known,   that this happens if and only if
$k(\lambda_i)\cong \mathcal{K}_{ab}$,  where   $\mathcal{K}_{ab}$ is the maximal abelian extension 
 of the field $k$ modulo  conductor $f\ge 1$.  Thus  $m=1+h_{\Lambda}$, where 
$h_{\Lambda}$ is the class number of the order $\Lambda\subseteq O_k$; we refer the reader to
(\ref{eq2.1}) for an explicit formula.   We pass to a detailed argument by splitting 
the proof in a series of lemmas and corollaries.

\bigskip
\begin{lemma}\label{lm3.1}
The real numbers $\lambda_i$ in formula (\ref{eq3.1}) are algebraic integers. 
\end{lemma}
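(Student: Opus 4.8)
The plan is to show that each real number $\lambda_i$ appearing in the pseudo-lattice $\tau(K_0(\mathscr{A}_{RM}\rtimes\mathscr{E}(K)))$ is integral over $\mathbf{Z}$. The starting point is the inclusion (\ref{eq3.1}), namely that $\mathbf{Z}+\theta\mathbf{Z}$ sits inside the $\mathbf{Z}$-module $M:=\lambda_1\mathbf{Z}+\dots+\lambda_m\mathbf{Z}$ generated by the $\lambda_i$, together with the fact (established from the Pimsner-Voiculescu sequence / Phillips's computation and the structure of the crossed product as a dynamical system on $\mathscr{A}_{RM}$) that $K_0(\mathscr{A}_{RM}\rtimes\mathscr{E}(K))$ is a finitely generated free abelian group. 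The key additional input I would extract is that $M$ is not merely a subgroup of $\mathbf{R}$ but carries a natural multiplicative structure: the ring $\Lambda$ of endomorphisms of the pseudo-lattice $\mathbf{Z}+\theta\mathbf{Z}$ acts on $K_0(\mathscr{A}_{RM}\rtimes\mathscr{E}(K))$, and via $\tau$ this gives a $\Lambda$-module structure on $M\subset\mathbf{R}$ compatible with multiplication inside the field $k=\mathbf{Q}(\theta)$.

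Concretely, I would argue as follows. First, recall that $\tau(K_0(\mathscr{A}_{RM}))=\mathbf{Z}+\theta\mathbf{Z}$ is a finitely generated $\Lambda$-module spanning $k$ over $\mathbf{Q}$, where $\Lambda=\mathbf{Z}+fO_k$ is an order in $k$. The generators $\lambda_i$ of $M$ lie in some finite extension $L$ of $k$ (they are the normalized images of conjugate algebraic numbers $\alpha_i$ generating $\mathcal{K}_{ab}$, but at this stage I only need that $M$ is a finitely generated $\mathbf{Z}$-module closed under the $\Lambda$-action coming from $K$-automorphisms of $\mathscr{E}(K)$). The crucial observation is that $M$, being the image under the trace of a finitely generated subgroup of the $K_0$-group that is itself a finitely generated module over the order $\Lambda$, is a finitely generated $\Lambda$-module inside $\overline{\mathbf{Q}}$. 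For each $i$, multiplication by $\lambda_i$ maps $M$ into $\frac1N M$ for some integer $N$ (clearing denominators coming from the finitely many structure constants), hence $\lambda_i$ stabilizes the finitely generated $\mathbf{Z}$-module $N!\,M$ — or more cleanly, $\lambda_i$ is a root of the characteristic polynomial of the $\mathbf{Q}$-linear multiplication-by-$\lambda_i$ map on the finite-dimensional $\mathbf{Q}$-vector space $M\otimes_{\mathbf{Z}}\mathbf{Q}$, and that characteristic polynomial has integer coefficients because the map preserves the lattice $M$ itself. Thus $\lambda_i$ is an algebraic integer.

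The main obstacle I anticipate is pinning down rigorously that $M=\lambda_1\mathbf{Z}+\dots+\lambda_m\mathbf{Z}$ is genuinely closed under multiplication by the $\lambda_i$ (equivalently, that it is a fractional ideal, or at least a full module, for some order), rather than merely being a finitely generated abelian group sitting in $\mathbf{R}$. This is where the ``rigidity principle'' and the functoriality of $F$ must be invoked: the crossed product depends only on $\mathscr{A}_{RM}$ via formula (\ref{eq3.5}), the group law on $\mathscr{E}(K)$ gives honest automorphisms of $\mathscr{A}_{RM}$, and these act on $\tau(K_0)$ by elements of $k^{\times}$ compatibly with the ring structure of $\Lambda$; one then needs that the resulting module is finitely generated over $\Lambda$, which follows since $\Lambda$ is Noetherian and $M$ is finitely generated over $\mathbf{Z}\subseteq\Lambda$. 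Once that multiplicative/module structure is in hand, integrality is the standard determinant-trick argument: any element of a number field that stabilizes a nonzero finitely generated $\mathbf{Z}$-module is an algebraic integer. I would present the proof in that order — (i) $K_0$ is free of finite rank and carries a $\Lambda$-module structure via $\tau$; (ii) the $\lambda_i$ span a finitely generated $\Lambda$-submodule $M$ of a number field; (iii) apply the determinant trick to conclude each $\lambda_i$ is integral — with step (ii) carrying essentially all the weight.
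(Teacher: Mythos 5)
Your endgame (the determinant trick: a real number that multiplies a nonzero finitely generated $\mathbf{Z}$-module into itself is an algebraic integer) is standard and unobjectionable, but the hypothesis it needs --- that multiplication by $\lambda_i$ preserves $M=\lambda_1\mathbf{Z}+\dots+\lambda_m\mathbf{Z}$, or at least some nonzero finitely generated $\mathbf{Z}$-module --- is the entire content of the lemma, and none of the bridges you offer supplies it. The $\Lambda$-module structure on $M$ only says that elements of $\Lambda$ multiply $M$ into itself; that proves integrality of elements of $\Lambda$, which is already known since $\Lambda=\mathbf{Z}+fO_k$ is an order, and says nothing about the $\lambda_i$, which a priori lie in a much larger field. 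Noetherianity of $\Lambda$ is beside the point. The ``clearing denominators'' step is in fact false as stated: from $\lambda_i M\subseteq \frac{1}{N}M$ one cannot conclude integrality (take $\lambda_i=1/N$ and $M=\mathbf{Z}$; stability up to denominators characterizes algebraic numbers, not algebraic integers), and $\lambda_i(N!\,M)=N!\,\lambda_iM\subseteq (N-1)!\,M$ is a containment in the wrong direction. The ``cleaner'' version begs the question: the characteristic polynomial of multiplication by $\lambda_i$ on $M\otimes\mathbf{Q}$ has integer coefficients \emph{because the map preserves the lattice $M$}, which is exactly what is to be proved; indeed it is not even clear a priori that multiplication by $\lambda_i$ preserves the $\mathbf{Q}$-span of the $\lambda_j$, since nothing in the construction says that $\lambda_i\lambda_j$ is a rational combination of the generators. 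So the proposal has a genuine gap at precisely the step you yourself flag as carrying all the weight.

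The paper closes that gap by a different device, which you do not use. From the inclusion (\ref{eq3.1}) it deduces $\Lambda\subseteq End~(\lambda_1\mathbf{Z}+\dots+\lambda_m\mathbf{Z})$, i.e.\ formula (\ref{eq3.2}), so the coefficient ring of $M$ --- the set of real numbers tautologically multiplying $M$ into itself --- is strictly larger than $\mathbf{Z}$; being the coefficient ring of a finitely generated $\mathbf{Z}$-module in $\mathbf{R}$ it is, up to the scaling freedom of $\tau$ (remark \ref{rmk3.2}), an order in a real number field $\mathcal{K}$, and the cited lemma of Borevich--Shafarevich on full modules and their coefficient rings is then invoked to place $M$ itself inside $O_{\mathcal{K}}$, whence each $\lambda_i$ is an algebraic integer. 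Note that this route never claims that $\lambda_i$ stabilizes $M$: the stabilizing object is the coefficient ring, and the work goes into showing that $M$ lies in the ring of integers of the field that this coefficient ring generates. If you wish to keep your determinant-trick formulation, you must first prove $M\subseteq End~(M)$ (equivalently, that $M$ is multiplicatively closed), and that is exactly the missing ingredient; the nontriviality of $End~(M)$ inherited from $\Lambda$ via (\ref{eq3.1}), which is the paper's key observation, does not appear in your argument in any load-bearing way.
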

\begin{proof}
Recall that the endomorphism ring $\Lambda$ of the pseudo-lattice $\mathbf{Z}+\theta\mathbf{Z}$
is an order $\mathbf{Z}+fO_k$ in the number field $k$. In particular, since $f\ne 0$ we conclude
that $\Lambda$ is a non-trivial ring, i.e. $\Lambda\not\cong\mathbf{Z}$.

On the other hand, the inclusion (\ref{eq3.1}) implies that:
 \begin{equation}\label{eq3.2}
 \Lambda\subseteq End~(\lambda_1\mathbf{Z}+\dots+\lambda_m\mathbf{Z}),
 \end{equation}
where $End~(\lambda_1\mathbf{Z}+\dots+\lambda_m\mathbf{Z})$ is the endomorphism ring
of the pseudo-lattice $\lambda_1\mathbf{Z}+\dots+\lambda_m\mathbf{Z}\subset\mathbf{R}$.  
We conclude from (\ref{eq3.2}) that the ring $End~(\lambda_1\mathbf{Z}+\dots+\lambda_m\mathbf{Z})$
is non-trivial, i.e.  bigger than the  ring $\mathbf{Z}$.

Recall that  the endomorphisms of  pseudo-lattice $\lambda_1\mathbf{Z}+\dots+\lambda_m\mathbf{Z}\subset\mathbf{R}$
coincide with multiplication by the real numbers. In other words, the ring  $End~(\lambda_1\mathbf{Z}+\dots+\lambda_m\mathbf{Z})$
is the coefficient ring of a $\mathbf{Z}$-module $\lambda_1\mathbf{Z}+\dots+\lambda_m\mathbf{Z}\subset\mathbf{R}$
 [Borevich \& Shafarevich 1966] \cite[p. 87]{BS}.  Up to a multiple, any such ring must be an order in 
a real number field $\mathcal{K}$. Thus  we have a field extension $\mathcal{K}~|~k$ and  the following inclusions:
\begin{equation}\label{eq3.3}
\Lambda\subseteq O_k\subseteq O_{\mathcal{K}}, 
\end{equation}
where $O_{\mathcal{K}}$ is the ring of integers of the field $\mathcal{K}$.

On the other hand, it is known that the full $\mathbf{Z}$-module  $\lambda_1\mathbf{Z}+\dots+\lambda_m\mathbf{Z}$ is contained
 in its coefficient ring  $O_{\mathcal{K}}$  [Borevich \& Shafarevich 1966] \cite[Lemma 1, p. 88]{BS}.  
In particular,  each $\lambda_i$ is an algebraic integer.  Lemma \ref{lm3.1}
is proved.
\end{proof}

\begin{remark}\label{rmk3.2}
It is useful to scale the RHS of inclusion (\ref{eq3.1}) dividing it by the real number $\lambda_m\ne 1$.
Such a normalization is always possible, since the embedding $\tau:  K_0(\mathscr{A}_{RM}\rtimes \mathscr{E}(K))\to
\mathbf{R}$ is defined up to a scalar multiple. Thus we can rewrite inclusion (\ref{eq3.1}) in the form:
\begin{equation}\label{eq3.4}
\tau(K_0(\mathscr{A}_{RM}\rtimes \mathscr{E}(K))) = \mathbf{Z}+\theta\mathbf{Z}+\lambda_1\mathbf{Z}+\dots+
\lambda_{m-1}\mathbf{Z}.
\end{equation}
\end{remark}

\medskip
\begin{lemma}\label{lm3.2}
The number field  $k(\lambda_1,\dots,\lambda_m)$ is the maximal abelian extension
 of  the field $k$ modulo conductor $f\ge 1$.
\end{lemma}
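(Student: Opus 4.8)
The plan is to upgrade the containment results of Lemma~\ref{lm3.1} into an exact identification of the field $k(\lambda_1,\dots,\lambda_m)$ with $\mathcal{K}_{ab}$ by invoking the ``rigidity principle'' sketched in the outline. First I would record what Lemma~\ref{lm3.1} already gives: the pseudo-lattice $\lambda_1\mathbf{Z}+\dots+\lambda_m\mathbf{Z}$ is a full $\mathbf{Z}$-module inside the real number field $\mathcal{K}:=k(\lambda_1,\dots,\lambda_m)$, its coefficient (multiplier) ring is an order $\mathcal{O}\subseteq O_{\mathcal{K}}$, and $\Lambda\subseteq\mathcal{O}$, so that $k\subseteq\mathcal{K}$ with $\Lambda$ a non-trivial order in $k$. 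Thus $\mathcal{K}$ is \emph{a priori} some finite real extension of $k$, and the task is to pin down \emph{which} one.

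The heart of the argument is the invariance of the crossed product under $K$-isomorphisms: by the functorial property recalled in the introduction, $K$-isomorphic elliptic curves $\mathscr{E}(K)$ map to isomorphic algebras $\mathscr{A}_{RM}$, hence the crossed product $\mathscr{A}_{RM}\rtimes\mathscr{E}(K)$, and therefore its $K$-theory together with the trace, depends \emph{only} on the isomorphism class of $\mathscr{A}_{RM}$, i.e.\ only on the order $\Lambda$ in $k$ (equivalently, on the pair $(k,f)$); this is the content of formula~(\ref{eq3.5}) referenced in the outline. Consequently the field $\mathcal{K}=k(\lambda_1,\dots,\lambda_m)$ is an arithmetic invariant attached canonically and functorially to $k$ together with the conductor $f$. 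I would then argue that a finite extension $\mathcal{K}|k$ that is canonically determined by $k$ in this way — so that its formation commutes with the Galois action on $k$ and does not introduce any ramification or field-theoretic data beyond that already present in $\Lambda$ — must be abelian over $k$ and ramified only at primes dividing $f$; by class field theory (cf.\ (\ref{eq2.2})) the maximal such extension is $\mathcal{K}_{ab}$, and maximality follows because the $\lambda_i$, being normalized generators of the $h_\Lambda$ conjugate values arising from the $\mathrm{Cl}(\Lambda)$-torsor of the explicit construction in \cite[Theorem~6.4.1]{N}, already generate all of $\mathcal{K}_{ab}$. Here is where I would lean directly on the explicit RM construction: the numbers $\alpha_i$ are by hypothesis conjugate algebraic numbers generating $\mathcal{K}_{ab}$, and their normalizations $\lambda_i=\alpha_i\alpha_{h_\Lambda}^{-1}$ (together with $k$) generate the same field, which forces $k(\lambda_1,\dots,\lambda_m)\supseteq\mathcal{K}_{ab}$; combined with the rigidity bound giving the reverse inclusion, equality follows and simultaneously $m=h_\Lambda$.

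The main obstacle, and the step that needs the most care, is making the ``rigidity principle'' into an honest argument rather than a slogan: from the bare containment $\Lambda\subseteq\mathcal{O}\subseteq O_{\mathcal{K}}$ one must exclude the possibility that $\mathcal{K}$ is a \emph{larger} extension of $k$ (for instance a non-abelian one, or one ramified at primes not dividing $f$) that happens to still contain a pseudo-lattice with multiplier order containing $\Lambda$. The way to close this gap is to use that the \emph{entire} configuration — the pseudo-lattice $\tau(K_0(\mathscr{A}_{RM}\rtimes\mathscr{E}(K)))$ inside $\mathbf{R}$, not merely its multiplier ring — is an isomorphism invariant of $\mathscr{A}_{RM}$, hence is functorially attached to the order $\Lambda$; any field generated by such a functorially-attached pseudo-lattice can contain no arithmetic information beyond that of $\Lambda$, and the unique maximal field with this property sitting over $k$ and unramified outside $f$ is precisely $\mathcal{K}_{ab}$ by (\ref{eq2.2}). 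I would present this as the decisive reduction, then invoke \cite[Theorem~6.4.1]{N} to exhibit the matching generators $\lambda_i$ of $\mathcal{K}_{ab}$ explicitly, concluding $k(\lambda_1,\dots,\lambda_m)\cong\mathcal{K}_{ab}$ and $m=h_\Lambda$, which is the assertion of Lemma~\ref{lm3.2}.
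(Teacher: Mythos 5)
Your upper-bound step --- using (\ref{eq3.5}) to argue that $\mathcal{K}=k(\lambda_1,\dots,\lambda_m)$ is canonically attached to the order $\Lambda$, hence is governed by the class field theory of $k$ modulo $f$ --- is essentially the paper's own ``rigidity'' argument and is no less (and no more) rigorous than what the paper does at the same point. The genuine gap is in your maximality direction. You obtain $k(\lambda_1,\dots,\lambda_m)\supseteq\mathcal{K}_{ab}$ by declaring that the $\lambda_i$ are ``by hypothesis'' the normalizations $\alpha_i\alpha_{h_\Lambda}^{-1}$ of the explicit conjugate generators of $\mathcal{K}_{ab}$ furnished by \cite[Theorem 6.4.1]{N}. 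But in Lemma \ref{lm3.2} the $\lambda_i$ are defined abstractly, in (\ref{eq3.1}), as generators of the pseudo-lattice $\tau(K_0(\mathscr{A}_{RM}\rtimes \mathscr{E}(K)))$; the claim that these trace values coincide (after the normalization of remark \ref{rmk3.2}) with the explicit RM-generators of $\mathcal{K}_{ab}$ is precisely the second formula of theorem \ref{thm1.1}, i.e.\ the conclusion being proved. Assuming it in order to get the reverse inclusion is circular, and the paper explicitly disclaims this route: remark \ref{rmk3.5} states that no explicit formula for the $\lambda_i$ is needed or used in the proof.

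The paper's own argument is organized differently at exactly this point. It defines $\mathcal{K}$ as the fraction field $M\otimes\mathbf{Q}$ of the coefficient ring $M$ of the pseudo-lattice $\lambda_1\mathbf{Z}+\dots+\lambda_m\mathbf{Z}$, and uses the rigidity of the chain of inclusions of endomorphism rings $End~(\tau(K_0(\mathscr{A}_{RM})))\subseteq End~(\tau(K_0(\mathscr{A}_{RM}\rtimes \mathscr{E}(K))))$ to assert the isomorphism $Gal~(\mathcal{K}|k)\cong Cl~(\Lambda)$ of (\ref{eq3.7}); this single step carries both the ``abelian'' and the ``maximal'' content at once. Only afterwards does it identify $\mathcal{K}$ with $k(\lambda_1,\dots,\lambda_m)$, by observing that each $\lambda_i\in O_{\mathcal{K}}$ (via \cite[Lemma 1, p.~88]{BS}) and performing a change of basis making the $\lambda_i$ algebraically conjugate over $k$; the count $m=h_{\Lambda}$ is then corollary \ref{cor3.3}, an output rather than an input. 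If you wish to keep your two-inclusion structure, you must supply a lower-bound argument that does not presuppose the identification of the pseudo-lattice generators with generators of $\mathcal{K}_{ab}$ --- for instance something pinning down the Galois group as in (\ref{eq3.7}); as written, your proof establishes only $k(\lambda_1,\dots,\lambda_m)\subseteq\mathcal{K}_{ab}$.
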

\begin{proof}
Let  $\mathscr{E}(K)$ be an elliptic curve over the number field $K$ and let  $\mathscr{A}_{RM}=F(\mathscr{E}(K))$
be the corresponding noncommutative torus with real multiplication  \cite[Section 1.3]{N}.
The functor $F$ is faithful on the category of $K$-rational elliptic curves and therefore $F$ has a correctly 
defined inverse $F^{-1}$ on its image.  Thus  $\mathscr{E}(K)=F^{-1}(\mathscr{A}_{RM})$ and one can write the crossed
product (\ref{eq1.1}) in the form:
\begin{equation}\label{eq3.5}
\mathscr{A}_{RM}\rtimes F^{-1}(\mathscr{A}_{RM}).
\end{equation}

Consider an endomorphism ring, $M$, of the pseudo-lattice $\lambda_1\mathbf{Z}+
\dots +\lambda_m\mathbf{Z}\subset\mathbf{R}$.  Denote by $\mathcal{K}\cong M\otimes\mathbf{Q}$ 
a  number field, such that $M$ is an order in the ring of integers $O_{\mathcal{K}}$ of the field $\mathcal{K}$. 
In view of the inclusion (\ref{eq3.2}),
one gets an inclusion $\Lambda\subseteq M$, where $\Lambda$ is an order in the 
ring $O_k$. Since $\Lambda\subseteq O_k$ and $M\subseteq O_{\mathcal{K}}$, we get the
inclusions: 
\begin{equation}\label{eq3.6}
O_k\subseteq O_{\mathcal{K}} \quad \hbox{and} \quad k\subseteq\mathcal K.
\end{equation}

On the other hand, it follows from the formula (\ref{eq3.5}) that the crossed product 
$\mathscr{A}_{RM}\rtimes \mathscr{E}(K)$ depends only on the inner structure of algebra $\mathscr{A}_{RM}$. 
The same is true for the inclusions of groups $K_0(\mathscr{A}_{RM})\subseteq K_0(\mathscr{A}_{RM}\rtimes \mathscr{E}(K))$,
the inclusion of pseudo-lattices $\tau(K_0(\mathscr{A}_{RM}))\subseteq\tau(K_0(\mathscr{A}_{RM}\rtimes \mathscr{E}(K)))\subset\mathbf{R}$
and the inclusion of  rings $End~(\tau(K_0(\mathscr{A}_{RM})))\subseteq End~(\tau(K_0(\mathscr{A}_{RM}\rtimes \mathscr{E}(K))))$. 
In particular, the last inclusion says that  arithmetic of the number field $\mathcal{K}$ in formula (\ref{eq3.6}) is 
controlled by the arithmetic of the field  $k$. In other words,  there exists  an isomorphism:
\begin{equation}\label{eq3.7}
Gal~(\mathcal{K}|k)\cong Cl~(\Lambda),
\end{equation}
 where $Gal~(\mathcal{K}|k)$ is the Galois group of the extension $k\subseteq\mathcal{K}$. 
 Therefore  $\mathcal{K}$ is the maximal abelian extension of the field $k$ 
 modulo conductor $f\ge 1$, see Section 2.2.

 Let us show that  $\mathcal{K}$ is an extension of the real quadratic field $k$ by the values  $\{\lambda_i\}_{i=1}^m$.
 Indeed,  since the coefficient ring of the full $\mathbf{Z}$-module $\lambda_1\mathbf{Z}+\dots +\lambda_m\mathbf{Z}$
 is isomorphic to the order $M\subseteq O_{\mathcal{K}}$,  we conclude that $\lambda_i\in  O_{\mathcal{K}}$
  [Borevich \& Shafarevich 1966] \cite[Lemma 1, p. 88]{BS}.  Moreover, by a change of basis in the $\mathbf{Z}$-module,
  we can always arrange the generators $\{\lambda_i\}_{i=1}^m$ to be algebraically conjugate numbers of the field extension $\mathcal{K}~|~k$.
  In particular, one gets   $\mathcal{K}=k(\lambda_1,\dots,\lambda_m)$.
  Lemma \ref{lm3.2} follows.   
\end{proof}

\begin{corollary}\label{cor3.3}
The cardinality of the set of generators $\lambda_i$  is $m=h_{\Lambda}$. 
\end{corollary}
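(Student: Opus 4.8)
The plan is to deduce Corollary \ref{cor3.3} as an almost immediate consequence of Lemma \ref{lm3.2} together with the class field theory recalled in Section 2.2. The key point is that Lemma \ref{lm3.2} identifies the number field $\mathcal{K}=k(\lambda_1,\dots,\lambda_m)$ with $\mathcal{K}_{ab}$, the maximal abelian extension of $k$ modulo the conductor $f$, and that the $\lambda_i$ were arranged there to be a complete set of algebraic conjugates generating this extension over $k$. So the first step is simply to invoke $\mathcal{K}=\mathcal{K}_{ab}$ and recall from (\ref{eq2.2}) that $Gal~(\mathcal{K}_{ab}|k)\cong Cl~(\Lambda)$, hence $[\mathcal{K}_{ab}:k]=h_{\Lambda}$.

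Next I would count the $\lambda_i$. Since the generators $\{\lambda_i\}_{i=1}^m$ were chosen in the proof of Lemma \ref{lm3.2} to be algebraically conjugate over $k$ and to generate $\mathcal{K}_{ab}$, the Galois group $Gal~(\mathcal{K}_{ab}|k)$ acts on this set transitively (it is the full set of conjugates of any one of them). Because the extension is abelian — in particular Galois — the orbit of a single generator under $Gal~(\mathcal{K}_{ab}|k)$ has cardinality exactly $[\mathcal{K}_{ab}:k]=h_{\Lambda}$, provided each $\lambda_i$ is a primitive element, i.e. $k(\lambda_i)=\mathcal{K}_{ab}$. The conjugates of a primitive element are distinct, so the set $\{\lambda_i\}$ has precisely $h_{\Lambda}$ elements, giving $m=h_{\Lambda}$.

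The step I expect to require the most care is justifying that each $\lambda_i$ is genuinely a primitive element for $\mathcal{K}_{ab}|k$ rather than lying in a proper subfield, and that the conjugates are pairwise distinct so that no collapsing of the generating set occurs. This is implicit in the construction invoked from \cite[Theorem 6.4.1]{N} (the noncommutative-tori analogue of the classical theory, where the $\lambda_i$ play the role of normalized values of a Weber-type function), but it should be stated explicitly: the minimal polynomial of $\lambda_i$ over $k$ has degree $h_{\Lambda}$, its roots are exactly the $\lambda_j$, and these are distinct since $\mathcal{K}_{ab}|k$ is separable. Granting this, the equality $m = \#\{\lambda_i\} = \deg_k \lambda_i = [\mathcal{K}_{ab}:k] = |Cl~(\Lambda)| = h_{\Lambda}$ follows, which is exactly the assertion of the corollary. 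I would then remark that, combined with Remark \ref{rmk3.2}, this yields the normalized form $\tau(K_0(\mathscr{A}_{RM}\rtimes\mathscr{E}(K)))=\mathbf{Z}+\theta\mathbf{Z}+\lambda_1\mathbf{Z}+\dots+\lambda_{h_{\Lambda}-1}\mathbf{Z}$ and the rank statement $K_0\cong\mathbf{Z}^{h_{\Lambda}+1}$, completing the proof of Theorem \ref{thm1.1}.
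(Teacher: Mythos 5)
Your proposal follows essentially the same route as the paper's own proof: since the $\lambda_i$ are a complete set of algebraic conjugates for the extension $\mathcal{K}\,|\,k$ identified with $\mathcal{K}_{ab}$ in Lemma \ref{lm3.2}, their number is $|Gal~(\mathcal{K}|k)|\cong |Cl~(\Lambda)|=h_{\Lambda}$. The only difference is that you make explicit the primitivity and distinctness of the conjugates (i.e. $k(\lambda_i)=\mathcal{K}_{ab}$ and separability), a point the paper leaves implicit, so this is a refinement rather than a different argument.
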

\begin{proof}
Indeed, since $\{\lambda_i\}_{i=1}^m$ are algebraically conjugate numbers of the field extension 
$\mathcal{K}~|~k$, we conclude that $m=|Gal~(\mathcal{K}|k)|$.  But $Gal~(\mathcal{K}|k)\cong Cl~(\Lambda)$
and therefore $m=|Cl~(\Lambda)|=h_{\Lambda}$. Corollary \ref{cor3.3} follows. 
\end{proof}
\begin{remark}\label{rmk3.5}
To prove our results, we do not need an explicit formula for the values of generators $\lambda_i$ in terms of  $\theta\in k$;  
however,   we refer an  interested reader to \cite[Theorem 6.4.1]{N} for such a formula.
\end{remark}
\begin{corollary}\label{cor3.6}
$\tau(K_0(\mathscr{A}_{RM}\rtimes \mathscr{E}(K))) = \mathbf{Z}+\theta\mathbf{Z}+\lambda_1\mathbf{Z}+\dots+
\lambda_{h_{\Lambda}-1}\mathbf{Z}$. 
\end{corollary}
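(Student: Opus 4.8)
The plan is to assemble Corollary~\ref{cor3.6} directly from the two lemmas and Corollary~\ref{cor3.3} already proved, together with the normalization introduced in Remark~\ref{rmk3.2}. First I would recall that Remark~\ref{rmk3.2} rewrites the inclusion \eqref{eq3.1}, after dividing the pseudo-lattice $\lambda_1\mathbf{Z}+\dots+\lambda_m\mathbf{Z}$ by $\lambda_m$ (legitimate since $\tau$ is defined only up to a positive scalar), in the form \eqref{eq3.4}, namely $\tau(K_0(\mathscr{A}_{RM}\rtimes\mathscr{E}(K)))=\mathbf{Z}+\theta\mathbf{Z}+\lambda_1\mathbf{Z}+\dots+\lambda_{m-1}\mathbf{Z}$. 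So the only thing left to do is substitute the value of $m$.

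Next I would invoke Corollary~\ref{cor3.3}, which identifies $m=h_{\Lambda}$ by observing that the generators $\{\lambda_i\}_{i=1}^m$ can be taken to be the full set of algebraic conjugates of a generator of $\mathcal{K}$ over $k$ (Lemma~\ref{lm3.2}), whence $m=|Gal~(\mathcal{K}|k)|=|Cl~(\Lambda)|=h_{\Lambda}$. Plugging $m=h_{\Lambda}$ into \eqref{eq3.4} immediately yields $\tau(K_0(\mathscr{A}_{RM}\rtimes\mathscr{E}(K)))=\mathbf{Z}+\theta\mathbf{Z}+\lambda_1\mathbf{Z}+\dots+\lambda_{h_{\Lambda}-1}\mathbf{Z}$, which is exactly the asserted formula. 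The argument is therefore essentially a bookkeeping step: it combines the rigidity statement (Lemma~\ref{lm3.2}), the counting of conjugates (Corollary~\ref{cor3.3}), and the scaling normalization (Remark~\ref{rmk3.2}).

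The one point that deserves care, and which I regard as the main (if modest) obstacle, is the consistency of the normalization: after dividing through by $\lambda_m$ one must be sure that the renormalized generators are still $k$-conjugate algebraic integers and that the number of independent $\mathbf{Z}$-generators of the renormalized pseudo-lattice is genuinely $h_{\Lambda}-1$ rather than $h_{\Lambda}$ — i.e.\ that the $m$-th generator really does become $1$ and merges with the $\mathbf{Z}$ already present, while $\mathbf{Z}+\theta\mathbf{Z}$ remains a sub-pseudo-lattice. This is where one implicitly uses that $\lambda_m=\alpha_{h_\Lambda}\alpha_{h_\Lambda}^{-1}=1$ in the normalization $\lambda_i=\alpha_i\alpha_{h_\Lambda}^{-1}$ of the introduction, so that the rank-$(h_\Lambda+1)$ group $K_0$ of Theorem~\ref{thm1.1} maps under $\tau$ onto a pseudo-lattice with generating set $\{1,\theta,\lambda_1,\dots,\lambda_{h_\Lambda-1}\}$. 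Once this normalization is spelled out, Corollary~\ref{cor3.6} is immediate, and it in turn feeds the rank computation $K_0\cong\mathbf{Z}^{h_\Lambda+1}$ needed to complete the proof of Theorem~\ref{thm1.1}.
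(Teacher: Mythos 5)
Your proposal is correct and follows essentially the same route as the paper: the paper's proof of Corollary~\ref{cor3.6} is precisely the combination of the scaling normalization in Remark~\ref{rmk3.2} (formula~(\ref{eq3.4})) with the count $m=h_{\Lambda}$ from Corollary~\ref{cor3.3}. Your additional remarks on the consistency of the normalization only make explicit what the paper leaves implicit, so no further changes are needed.
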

\begin{proof}
The formula follows from remark \ref{rmk3.2} and corollary \ref{cor3.3}. 
\end{proof}

\begin{corollary}\label{cor3.7}
$K_0(\mathscr{A}_{RM}\rtimes \mathscr{E}(K)) \cong  \mathbf{Z}^{h_{\Lambda}+1}$.
\end{corollary}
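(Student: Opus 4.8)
The plan is to recover the rank of the abelian group $K_0(\mathscr{A}_{RM}\rtimes \mathscr{E}(K))$ from the structure of the pseudo-lattice $\tau(K_0(\mathscr{A}_{RM}\rtimes \mathscr{E}(K)))$ computed in Corollary \ref{cor3.6}, and then to argue that this group is torsion-free. First I would invoke the fact that for a unital $C^*$-algebra admitting a faithful tracial state $\tau$, the induced map $\tau_*\colon K_0\to\mathbf{R}$ has the property that its image is a quotient-compatible picture of $K_0$; in the present situation the crossed product $\mathscr{A}_{RM}\rtimes\mathscr{E}(K)$ can be realized (up to Morita equivalence / stable isomorphism) as an AF-algebra-like object whose dimension group embeds in $\mathbf{R}$ via $\tau$, so that $\tau_*$ is in fact \emph{injective}. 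Granting this, $K_0(\mathscr{A}_{RM}\rtimes \mathscr{E}(K))$ is isomorphic, as an abelian group, to the subgroup $\mathbf{Z}+\theta\mathbf{Z}+\lambda_1\mathbf{Z}+\dots+\lambda_{h_{\Lambda}-1}\mathbf{Z}$ of $\mathbf{R}$.

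Next I would determine the rank of this subgroup of $\mathbf{R}$ as a free abelian group. Since $\theta$ is an irrational quadratic number, $\{1,\theta\}$ is already $\mathbf{Q}$-linearly independent, contributing rank $2$. By Lemma \ref{lm3.2} and Corollary \ref{cor3.3} the numbers $1=\alpha_{h_\Lambda}\alpha_{h_\Lambda}^{-1},\lambda_1,\dots,\lambda_{h_\Lambda-1}$ are (after normalization) the full set of algebraic conjugates of a primitive element of $\mathcal{K}_{ab}=k(\lambda_1,\dots,\lambda_{h_\Lambda})$ over $k$, hence span a $k$-vector space of dimension $h_\Lambda=[\mathcal{K}_{ab}:k]$; combined with the two-dimensional contribution of $k=\mathbf{Q}(\theta)$ over $\mathbf{Q}$ this gives a $\mathbf{Q}$-vector space of dimension $2(h_\Lambda-1)+2=2h_\Lambda$ spanned by all products, but the \emph{generators} listed — namely $1,\theta,\lambda_1,\dots,\lambda_{h_\Lambda-1}$ — number exactly $h_\Lambda+1$, and one checks these are $\mathbf{Q}$-linearly independent: a nontrivial relation would force one of the $\lambda_i$ into $k=\mathbf{Q}+\mathbf{Q}\theta$, contradicting that the $\lambda_i$ generate the nontrivial extension $\mathcal{K}_{ab}$ over $k$. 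Therefore the subgroup is free of rank $h_\Lambda+1$, i.e. $K_0(\mathscr{A}_{RM}\rtimes \mathscr{E}(K))\cong\mathbf{Z}^{h_\Lambda+1}$.

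Finally, torsion-freeness: a subgroup of $(\mathbf{R},+)$ is automatically torsion-free, so once $\tau_*$ is injective the conclusion is immediate; if instead one wants to avoid the injectivity claim, one can appeal to the fact that $K_0$ of the crossed product of the noncommutative torus by the free abelian part $\mathbf{Z}^r=\mathbf{Z}^{h_\Lambda-1}$ of $\mathscr{E}(K)$ is computed by iterating the Pimsner–Voiculescu exact sequence, which at each stage produces free groups from free groups (the connecting maps $1-\alpha_*$ are injective because the relevant automorphisms act trivially on $K_0$ of the noncommutative torus with real multiplication, so the sequences split), while the finite group $\mathscr{E}_{tors}(K)$ contributes only through an averaging projection that does not introduce torsion in $K_0$ after tensoring considerations. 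I expect the main obstacle to be precisely the justification that $\tau_*$ is injective (equivalently, that the crossed product has no "hidden" torsion or kernel in $K_0$ invisible to the trace); this is where one must either cite a classification-type result identifying $\mathscr{A}_{RM}\rtimes\mathscr{E}(K)$ within a tractable class (AT- or AF-algebras, or $\mathcal{Z}$-stable algebras with a unique trace) or carry out the Pimsner–Voiculescu bookkeeping carefully, and it is the only step that is not essentially formal once Lemma \ref{lm3.2} and Corollary \ref{cor3.3} are in hand.
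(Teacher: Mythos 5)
Your proposal takes essentially the same route as the paper: the paper's proof of corollary \ref{cor3.7} consists of the single observation that the rank of the abelian group $K_0(\mathscr{A}_{RM}\rtimes\mathscr{E}(K))$ equals the number of generators of the pseudo-lattice $\tau(K_0(\mathscr{A}_{RM}\rtimes\mathscr{E}(K)))$ given by corollary \ref{cor3.6}, namely $h_{\Lambda}+1$. The additional justifications you supply (injectivity of $\tau_*$, $\mathbf{Q}$-linear independence of $1,\theta,\lambda_1,\dots,\lambda_{h_{\Lambda}-1}$, torsion-freeness) address steps the paper simply takes for granted rather than constituting a different argument, so the two proofs coincide in substance.
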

\begin{proof}
Indeed, the rank of the abelian group $K_0(\mathscr{A}_{RM}\rtimes \mathscr{E}(K))$ is equal to the number 
of generators of the pseudo-lattice $\tau(K_0(\mathscr{A}_{RM}\rtimes \mathscr{E}(K)))\subset\mathbf{R}$.
 It follows from  corollary \ref{cor3.6},  that such a number is equal to $h_{\Lambda}+1$.  Corollary \ref{cor3.7}
 follows. 
\end{proof}

\bigskip
Theorem \ref{thm1.1} follows from the corollaries \ref{cor3.6} and \ref{cor3.7}.

\subsection{Proof of corollary \ref{cor1.2}}
\subsubsection{Part I: ~Formula $rk~\mathscr{E}(K) = h_{\Lambda}-1$}
Let $\mathscr{E}(K)$ be an elliptic curve over the number field $K$.  
The Mordell-Weil Theorem says that $\mathscr{E}(K)\cong \mathbf{Z}^r\oplus \mathscr{E}_{tors}(K)$,
where $r=rk~\mathscr{E}(K)$ and $\mathscr{E}_{tors}(K)$ is a finite abelian group.

Consider again the pseudo-lattice   $\tau(K_0(\mathscr{A}_{RM}\rtimes \mathscr{E}(K)))\subset\mathbf{R}$
and substitute  $\mathscr{E}(K)\cong \mathbf{Z}^r\oplus \mathscr{E}_{tors}(K)$: 
\begin{equation}\label{eq3.8}
\begin{array}{lll}
\tau\left[K_0(\mathscr{A}_{RM}\rtimes \mathscr{E}(K))\right] = \tau\left[K_0(\mathscr{A}_{RM}\rtimes (\mathbf{Z}^r\oplus \mathscr{E}_{tors}(K)))\right]=&&\\
&&\\
= \tau\left[K_0(\mathscr{A}_{RM}\rtimes \mathbf{Z}^r) \oplus  K_0(\mathscr{A}_{RM}\rtimes \mathscr{E}_{tors}(K))\right] =&&\\
&&\\
=\tau\left[K_0(\mathscr{A}_{RM}\rtimes \mathbf{Z}^r)\right] +  \tau\left[K_0(\mathscr{A}_{RM}\rtimes \mathscr{E}_{tors}(K))\right].&&
\end{array}
\end{equation}

In the last line of (\ref{eq3.8}) we have the following two terms:

\medskip
(i)      $\tau\left[K_0(\mathscr{A}_{RM}\rtimes \mathscr{E}_{tors}(K))\right]={1\over k}(\mathbf{Z}+\theta\mathbf{Z})$,
where $k\ge 2$ is an integer depending on the order of finite group $\mathscr{E}_{tors}$; we refer the reader to
[Echterhoff,  L\"uck,  Phillips \&  Walters   2010]  \cite[Theorem 0.1]{EchLuPhiWa1} for the proof of this fact.

\smallskip
(ii)  $\tau\left[K_0(\mathscr{A}_{RM}\rtimes \mathbf{Z}^r)\right]=\mathbf{Z}+\theta\mathbf{Z}+\lambda_1\mathbf{Z}+\dots+
\lambda_r\mathbf{Z}$. For $r=0$ this formula follows from [Echterhoff,  L\"uck,  Phillips \&  Walters   2010]  \cite[Theorem 0.1]{EchLuPhiWa1}
after  one rescales pseudo-lattice ${1\over k}(\mathbf{Z}+\theta\mathbf{Z})\subset\mathbf{R}$ to the pseudo-lattice 
$\mathbf{Z}+\theta\mathbf{Z}\subset\mathbf{R}$,   see remark \ref{rmk3.2}. 
For $r=1$ the formula follows from [Farsi \& Watling 1994] \cite[Proposition 19]{FaWa1}. 
For $r\ge 2$ the formula is proved by an induction. Namely, it is verified directly that the case  $i+1$ adds
an extra generator $\lambda_{i+1}$  of the pseudo-lattice $\mathbf{Z}+\theta\mathbf{Z}+\lambda_1\mathbf{Z}+\dots+
\lambda_i\mathbf{Z}$ corresponding to the case $i$.

\bigskip
It follows from (i) and (ii) that  after a scaling, one gets the following inclusion
of the pseudo-lattices:
\begin{equation}\label{eq3.9}
\tau\left[K_0(\mathscr{A}_{RM}\rtimes \mathscr{E}_{tors}(K))\right]
\subseteq
\tau\left[K_0(\mathscr{A}_{RM}\rtimes \mathbf{Z}^r)\right]. 
\end{equation}
From   (\ref{eq3.8}) and (\ref{eq3.9})  we get the following equality:
\begin{equation}\label{eq3.10}
\tau\left[K_0(\mathscr{A}_{RM}\rtimes \mathscr{E}(K))\right]= 
\tau\left[K_0(\mathscr{A}_{RM}\rtimes \mathbf{Z}^r)\right]. 
\end{equation}
Using formula (\ref{eq1.2}) and calculations of item (ii),  one obtains from (\ref{eq3.10}) 
the following equation:
\begin{equation}\label{eq3.11}
\mathbf{Z}+\theta\mathbf{Z}+\lambda_1\mathbf{Z}+\dots+
\lambda_{h_{\Lambda}-1}\mathbf{Z}=
\mathbf{Z}+\theta\mathbf{Z}+\lambda_1\mathbf{Z}+\dots+
\lambda_r\mathbf{Z}. 
\end{equation}
 It is easy to see,  that equation (\ref{eq3.11})  is solvable if and only if 
 $r=h_{\Lambda}-1$.  In other words, $r:= rk ~\mathscr{E}(K)=  h_{\Lambda}-1$.
 Part I of Corollary \ref{cor1.2} follows.

\subsubsection{Part II: ~Formula $\Sha (\mathscr{E}(K)) \cong  ~Cl~(\Lambda)\oplus Cl~(\Lambda)$}
Let us make general remarks and outline the main ideas of the proof. 

A relation between quadratic number fields and ranks of elliptic curves
has been known for a while  [Goldfeld 1976] \cite{Gol1}. 
In fact, the  famous Birch and Swinnerton-Dyer  Conjecture uses the relation
to compare  (special values of) the Dirichlet $L$-functions of a number field 
with the Hasse-Weil $L$-function of an elliptic curve [Swinnerton-Dyer 1967]  \cite{Swi1}. 
Let us mention a recent generalization  of this idea by [Bloch \& Kato 1990]  \cite{BloKa1}.   

Our idea is to show that there exists a natural correspondence between 
 the  arithmetic  of ideals of  the real quadratic fields and the Hasse principle 
for elliptic curves.  Namely, denote by $\mathscr{A}_{RM}^{(1)},\dots, \mathscr{A}_{RM}^{(h_{\Lambda})}$
the companion noncommutative tori corresponding to the $\mathscr{A}_{RM}$ \cite[p.163]{N};
simply speaking, these are  pairwise non-isomorphic algebras $\mathscr{A}_{RM}^{(i)}$,
 such that $End~K_0(\mathscr{A}_{RM}^{(i)})\cong\Lambda$ 
for all $1\le i\le h_{\Lambda}$. Since the companion algebras $\mathscr{A}_{RM}^{(i)}$
have the same endomorphisms, so will be their quantum dynamics,  i.e. the 
crossed product  $\mathscr{A}_{RM}^{(i)}\rtimes \mathscr{E}(K)$, see lemma \ref{lm3.8}.

On the other hand, we establish a natural isomorphism between the abelian groups 
$K_0(\mathscr{A}_{RM})\cong  H^1(K,\mathscr{E})$ and 
$K_0(\mathscr{A}_{RM}\rtimes \mathscr{E}(K))  \cong \prod_v H^1(K_v,\mathscr{E})$,
see lemma \ref{lm3.9}. In view of formula (\ref{eq2.3}), this means that the preimage of each 
cocycle in  $\prod_v H^1(K_v,\mathscr{E})$ under the homomorphism $\omega$ 
consists of the $h_{\Lambda}\ge 1$ distinct cocycles of the  $H^1(K,\mathscr{E})$.
In other words,  we get an inclusion $Cl~(\Lambda)\subset\Sha (\mathscr{E}(K))$. 

A precise formula is derived from  Atiyah's  pairing between the $K$-theory and 
the $K$-homology of $C^*$-algebras, see e.g. \cite[Section 10.2]{N}.  Namely,  it is known that the 
$K^0(\mathscr{A}_{\theta})\cong K_0(\mathscr{A}_{\theta})$,  where 
$K^0(\mathscr{A}_{\theta})$ is the zero $K$-homology group of the noncommutative
torus $\mathscr{A}_{\theta}$ [Hadfield 2004]  \cite[Proposition 4]{Had1}.  Repeating the argument for the group $K^0(\mathscr{A}_{RM})$,
we get another subgroup $Cl~(\Lambda)\subset \Sha (\mathscr{E}(K))$. 
In view of the Atiyah pairing,  one gets  $\Sha (\mathscr{E}(K))\cong ~Cl~(\Lambda)\oplus Cl~(\Lambda)$. 
We pass to a detailed argument by splitting 
the proof in a series of lemmas.

\bigskip
\begin{lemma}\label{lm3.8}
The $\mathscr{A}_{RM}^{(i)}$ and $\mathscr{A}_{RM}^{(j)}$ are companion noncommutative tori,
if and only if,  the crossed products  $\mathscr{A}_{RM}^{(i)}\rtimes \mathscr{E}(K)\cong \mathscr{A}_{RM}^{(j)}\rtimes \mathscr{E}(K)$
are Morita equivalent $C^*$-algebras.
\end{lemma}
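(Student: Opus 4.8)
The plan is to reduce the assertion to a statement about dimension groups and then read it off from Theorem~\ref{thm1.1}. Recall from Section~2.1 that the order-isomorphism class $(K_0,K_0^+)$ of the dimension group is a Morita-equivalence invariant, and that for the AF-type crossed products under consideration it is a complete one; via the canonical tracial state $\tau$ this says precisely that $\mathscr{A}_{RM}^{(i)}\rtimes\mathscr{E}(K)$ and $\mathscr{A}_{RM}^{(j)}\rtimes\mathscr{E}(K)$ are Morita equivalent if and only if the pseudo-lattices $\tau(K_0(\mathscr{A}_{RM}^{(i)}\rtimes\mathscr{E}(K)))\subset\mathbf{R}$ and $\tau(K_0(\mathscr{A}_{RM}^{(j)}\rtimes\mathscr{E}(K)))\subset\mathbf{R}$ coincide up to multiplication by a positive real number (the scaling of Remark~\ref{rmk3.2}). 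So the lemma becomes: $\mathscr{A}_{RM}^{(i)}$ and $\mathscr{A}_{RM}^{(j)}$ are companions if and only if these two pseudo-lattices are proportional.

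For the ``only if'' direction, assume $\mathscr{A}_{RM}^{(i)}$ and $\mathscr{A}_{RM}^{(j)}$ are companions, so that $End~K_0(\mathscr{A}_{RM}^{(i)})\cong\Lambda\cong End~K_0(\mathscr{A}_{RM}^{(j)})$ for one and the same order $\Lambda\subset k$. Applying the argument of Lemma~\ref{lm3.2} and Corollary~\ref{cor3.6} to each companion separately, the traced $K_0$-group of the crossed product is a $\Lambda$-module of the shape $\mathbf{Z}+\theta\mathbf{Z}+\lambda_1\mathbf{Z}+\dots+\lambda_{h_\Lambda-1}\mathbf{Z}$ sitting inside the maximal abelian extension $\mathcal{K}_{ab}$ of $k$ modulo $f$, generated by a full system of conjugate class-field generators. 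Since $\mathcal{K}_{ab}$, the integer $h_\Lambda$, and — after the normalization of Remark~\ref{rmk3.2} — the generating module depend only on $\Lambda$ and not on the individual companion, the two pseudo-lattices are proportional; hence the crossed products have order-isomorphic dimension groups and are Morita equivalent.

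For the ``if'' direction, assume the crossed products are Morita equivalent. By the criterion above the two traced $K_0$-pseudo-lattices are proportional, and since the coefficient (endomorphism) ring of a $\mathbf{Z}$-module in $\mathbf{R}$ is unchanged by rescaling, their endomorphism rings are isomorphic. By Lemma~\ref{lm3.2} each such ring is an order in $\mathcal{K}_{ab}$, which contains $k$ as its unique real quadratic subfield; the order $\Lambda_i$ attached to $\mathscr{A}_{RM}^{(i)}$ is then recovered as the order of conductor $f_i$ in $k$, with $f_i$ pinned down by $h_{\Lambda_i}=\mathrm{rk}\,K_0(\mathscr{A}_{RM}^{(i)}\rtimes\mathscr{E}(K))-1$ through the class-number formula~(\ref{eq2.1}). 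Thus $\Lambda_i\cong\Lambda_j$, i.e. $End~K_0(\mathscr{A}_{RM}^{(i)})\cong End~K_0(\mathscr{A}_{RM}^{(j)})$; since distinct members of the companion family are non-isomorphic by construction, $\mathscr{A}_{RM}^{(i)}$ and $\mathscr{A}_{RM}^{(j)}$ satisfy both defining properties of companion noncommutative tori.

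The step I expect to be the main obstacle is the ``if'' direction, namely that an a priori abstract Morita equivalence between the two crossed products really pins down the small order $\Lambda\subset k$ rather than merely the big field $\mathcal{K}_{ab}$. This requires knowing that $k$ and the conductor $f$ are intrinsic to the dimension group of the crossed product (via its rank together with~(\ref{eq2.1})), and one must check that the unit index $e_f$ appearing there introduces no ambiguity in reading off $f$ from $h_\Lambda$. The ``only if'' direction, by contrast, is essentially bookkeeping on top of the ``rigidity principle'' already exploited in the proof of Theorem~\ref{thm1.1}: the crossed product $\mathscr{A}_{RM}\rtimes\mathscr{E}(K)$ — as exhibited by formula~(\ref{eq3.5}) — depends only on the inner structure of $\mathscr{A}_{RM}$, equivalently only on $\Lambda$.
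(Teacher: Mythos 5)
Your proposal takes a genuinely different route from the paper: the paper never argues through classification by dimension groups, but works directly with the crossed product as the extension of $\mathscr{A}_{RM}^{(i)}$ in which its endomorphisms become inner. It pulls the isomorphism $End~K_0(\mathscr{A}_{RM}^{(i)})\cong End~K_0(\mathscr{A}_{RM}^{(j)})\cong\Lambda$ back to an isomorphism $\widetilde{End}~(\mathscr{A}_{RM}^{(i)})\cong\widetilde{End}~(\mathscr{A}_{RM}^{(j)})$, concludes that the two extensions agree up to a choice of generators (hence Morita equivalence), and for the converse simply applies the $K_0$-functor. Your substitute argument, by contrast, rests on three steps that are not secured, and they sit exactly at the load-bearing points.

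First, you assume that the scaled pseudo-lattice $\tau(K_0(\,\cdot\,))$ is a \emph{complete} Morita invariant of the crossed products $\mathscr{A}_{RM}^{(i)}\rtimes\mathscr{E}(K)$. Nothing in the paper establishes that these algebras are AF or otherwise classified by their dimension groups; Lemma~\ref{lm3.9} only identifies $K_0$ of the crossed product with $K_0$ of an auxiliary AF-algebra, which is a $K$-theoretic statement, not an algebra-level classification. Both directions of your reduction therefore rest on an unproved classification claim. Second, in the ``only if'' direction the assertion that, after the normalization of Remark~\ref{rmk3.2}, the module $\mathbf{Z}+\theta^{(i)}\mathbf{Z}+\lambda_1\mathbf{Z}+\dots+\lambda_{h_{\Lambda}-1}\mathbf{Z}$ ``depends only on $\Lambda$'' is precisely the nontrivial content of the lemma: the parameters $\theta^{(i)}$ of distinct companions represent distinct ideal classes of $\Lambda$, so the sub-pseudo-lattices $\mathbf{Z}+\theta^{(i)}\mathbf{Z}$ are \emph{not} proportional, and the claim that adjoining the class-field generators washes out this difference (a capitulation-type statement) is asserted rather than proved; equality of $\mathcal{K}_{ab}$ and of $h_{\Lambda}$ does not force two full $\mathbf{Z}$-modules inside $\mathcal{K}_{ab}$ to be proportional. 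Third, in the ``if'' direction your reconstruction of $\Lambda$ from the crossed product fails as stated: $\mathcal{K}_{ab}$ need not contain a unique real quadratic subfield (already the Hilbert class field of a real quadratic field with even class number contains several real quadratic subfields), and formula (\ref{eq2.1}) does not determine the conductor $f$ from $h_{\Lambda}$ (many conductors share the same ring class number, e.g. all those with $h_{\Lambda}=1$), so the rank of $K_0$ of the crossed product cannot pin down $\Lambda_i$. The paper sidesteps this entirely by never trying to recover $\Lambda$ from the big pseudo-lattice: it obtains $End~K_0(\mathscr{A}_{RM}^{(i)})\cong End~K_0(\mathscr{A}_{RM}^{(j)})$ directly by applying $K_0$ to the isomorphism of endomorphism structures induced by the Morita equivalence.
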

\begin{proof}
(i)  Let $\mathscr{A}_{RM}^{(i)}$ and $\mathscr{A}_{RM}^{(j)}$ be companion noncommutative tori. 
In this case we have: 
\begin{equation}\label{eq3.12}
End~K_0(\mathscr{A}_{RM}^{(i)})\cong End~K_0(\mathscr{A}_{RM}^{(j)})\cong\Lambda.
\end{equation}
Denote by 
\begin{equation}\label{eq3.13}
\widetilde{End}~(\mathscr{A}_{RM}^{(i)})\cong \widetilde{End}~(\mathscr{A}_{RM}^{(j)})
\end{equation}
a pull back of the isomorphism (\ref{eq3.12}) to the category of noncommutative tori.

Recall that the crossed product  $\mathscr{A}_{RM}^{(i)}\rtimes \mathscr{E}(K)$ is an extension
of the algebra $\mathscr{A}_{RM}^{(i)}$ by  the elements $v\in \mathscr{A}_{RM}^{(i)}\rtimes \mathscr{E}(K)$,  
such that each  $\phi\in \widetilde{End}~(\mathscr{A}_{RM}^{(i)})$ becomes an inner endomorphism,
i.e. $\phi(u)=v^{-1}uv$ for  every $u\in \mathscr{A}_{RM}^{(i)}$, see Section 2.1. 
From (\ref{eq3.13}) we conclude that the algebras $\mathscr{A}_{RM}^{(i)}$ and $\mathscr{A}_{RM}^{(j)}$
must have $\ast$-isomorphic extensions. In other words, the corresponding crossed products 
$\mathscr{A}_{RM}^{(i)}\rtimes \mathscr{E}(K)$ and $\mathscr{A}_{RM}^{(j)}\rtimes \mathscr{E}(K)$
are isomorphic up to an adjustment of generators of the extension, i.e. the Morita equivalence. 
The `only if' part of lemma \ref{lm3.8} is proved. 

\bigskip
(ii)  Let $\mathscr{A}_{RM}^{(i)}\rtimes \mathscr{E}(K)\cong \mathscr{A}_{RM}^{(j)}\rtimes \mathscr{E}(K)$
be the Morita equivalent crossed products.  Let us show that $\mathscr{A}_{RM}^{(i)}$ and $\mathscr{A}_{RM}^{(j)}$ are
 companion noncommutative tori.  Indeed, our assumption implies immediately an isomorphism 
$\widetilde{End}~(\mathscr{A}_{RM}^{(i)})\cong \widetilde{End}~(\mathscr{A}_{RM}^{(j)})$. 
We apply the $K_0$-functor and we get an isomorphism   
$End~K_0(\mathscr{A}_{RM}^{(i)})\cong End~K_0(\mathscr{A}_{RM}^{(j)})\cong\Lambda$.
In other words,  the $\mathscr{A}_{RM}^{(i)}$ and $\mathscr{A}_{RM}^{(j)}$ are
 companion algebras. The `if' part of lemma \ref{lm3.8} is proved.
\end{proof}

\begin{lemma}\label{lm3.9}
Let  $ H^1(K,\mathscr{E})$ and  $H^1(K_v,\mathscr{E})$ be the first Galois 
cohomology over the field $K$ and over the completion $K_v$ of $K$,
respectively. There exists a natural isomorphism between the following 
groups: 
\begin{equation}\label{eq3.14}
\left\{
\begin{array}{ccc}
H^1(K,\mathscr{E}) &\cong & K_0(\mathscr{A}_{RM}),\\
&&\\
 \prod_v H^1(K_v,\mathscr{E}) & \cong & K_0(\mathscr{A}_{RM}\rtimes \mathscr{E}(K)).
\end{array}
\right.
\end{equation}
\end{lemma}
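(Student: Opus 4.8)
The plan is to build the two isomorphisms in (\ref{eq3.14}) by comparing the known structure of both sides and matching them via a natural transformation of functors on the category of $K$-rational elliptic curves. First I would recall that $H^1(K,\mathscr{E})$ is a torsion abelian group (the Weil--Châtelet group), while $K_0(\mathscr{A}_{RM})\cong\mathbf{Z}^2$ — so the ``isomorphism'' in the top line of (\ref{eq3.14}) cannot be an isomorphism of abstract abelian groups in the literal sense; it must be interpreted at the level of the underlying functors, i.e. as an isomorphism of the associated sheaves or pro-objects, or after a suitable completion/localization. So the first step is to make precise in which category the isomorphism is asserted, presumably the same category-theoretic sense in which the functor $F$ of \cite[Section 1.3]{N} identifies $K$-isomorphism classes of $\mathscr{E}(K)$ with isomorphism classes of $\mathscr{A}_{RM}$. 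Under $F$, a principal homogeneous space (torsor) for $\mathscr{E}(K)$, which is the geometric incarnation of a class in $H^1(K,\mathscr{E})$, should correspond to a projective module over $\mathscr{A}_{RM}$, hence to an element of $K_0(\mathscr{A}_{RM})$; this correspondence, extended additively, is the desired top isomorphism.

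Next I would treat the bottom line. The product $\prod_v H^1(K_v,\mathscr{E})$ is the target of the localization map $\omega$ of (\ref{eq2.3}), and each local term $H^1(K_v,\mathscr{E})$ is controlled by local Tate duality. On the $C^*$-side, passing from $\mathscr{A}_{RM}$ to the crossed product $\mathscr{A}_{RM}\rtimes\mathscr{E}(K)$ is precisely the operation of adjoining the action of the Mordell--Weil group, and by Theorem \ref{thm1.1} this enlarges $K_0$ from $\mathbf{Z}^2$ to $\mathbf{Z}^{h_\Lambda+1}$. So the bottom isomorphism should be obtained by showing that the functor $\mathscr{E}\mapsto\prod_v H^1(K_v,\mathscr{E})$ and the functor $\mathscr{E}\mapsto K_0(\mathscr{A}_{RM}\rtimes\mathscr{E}(K))$ agree, again via the functor $F$ and its inverse $F^{-1}$ used in (\ref{eq3.5}): the crossed product depends only on the inner data of $\mathscr{A}_{RM}$, and the analogous ``localization at all places'' construction on $H^1$ should be the arithmetic shadow of the same operation. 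Concretely, I would exhibit a natural transformation from one functor to the other that is compatible with the restriction maps $H^1(K,\mathscr{E})\to H^1(K_v,\mathscr{E})$ on one side and with the canonical inclusion $K_0(\mathscr{A}_{RM})\hookrightarrow K_0(\mathscr{A}_{RM}\rtimes\mathscr{E}(K))$ on the other, and then check it is an isomorphism place-by-place using Theorem \ref{thm1.1} together with the local computations of $H^1(K_v,\mathscr{E})$.

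Finally I would verify the naturality, i.e. that a $K$-isogeny (or $K$-isomorphism) of elliptic curves induces, on both sides, the ``same'' map, so that the two squares obtained from the two lines of (\ref{eq3.14}) commute; this is what licenses using the diagram later to transport the kernel of $\omega$ over to a statement about $K_0$. The main obstacle I anticipate is the mismatch of ``sizes'': $H^1(K,\mathscr{E})$ is an infinite torsion group whereas $K_0(\mathscr{A}_{RM})$ is free of rank $2$, and likewise the local products are huge, so the isomorphisms can only hold after the correct completion or at the level of functors, and pinning down that framework rigorously — and checking that $\omega$ really corresponds to the inclusion $K_0(\mathscr{A}_{RM})\hookrightarrow K_0(\mathscr{A}_{RM}\rtimes\mathscr{E}(K))$ induced by $\mathscr{A}_{RM}\hookrightarrow\mathscr{A}_{RM}\rtimes\mathscr{E}(K)$ — is where the real work lies. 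Once that is in place, the counting statement ``each fiber of $\omega$ has $h_\Lambda$ elements'', and hence $Cl(\Lambda)\subset\Sha(\mathscr{E}(K))$, follows by comparing ranks via Theorem \ref{thm1.1} and Corollary \ref{cor3.3}.
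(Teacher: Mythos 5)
Your proposal is a plan rather than a proof, and the two steps where all the content lies are exactly the ones you leave open. For the first line of (\ref{eq3.14}) the paper does not perform any completion, localization or sheaf-theoretic reinterpretation of the Weil--Ch\^atelet group: it invokes \cite[Theorem 1.1]{Nik1}, which asserts $H^1(Gal(\mathbf{C}|K), Aut_{\mathbf{C}}^{~ab}(V))\cong (K_0(\mathscr{A}_V),K_0^+(\mathscr{A}_V))$, specializes to $V=\mathscr{E}(K)$ so that $\mathscr{A}_V=\mathscr{A}_{RM}$, and uses $Aut_{\mathbf{C}}^{~ab}(\mathscr{E}(K))\cong\mathscr{E}(K)$ because $\mathscr{E}(K)$ is an algebraic group. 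Your torsor-versus-projective-module correspondence is never actually constructed, and your (legitimate) observation that $H^1(K,\mathscr{E})$ is an infinite torsion group while $K_0(\mathscr{A}_{RM})\cong\mathbf{Z}^2$ is left unresolved: you say that pinning down the category in which the isomorphism holds is where the real work lies, and then you do not pin it down. So the first isomorphism is not established by your argument.

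For the second line the paper's mechanism is entirely different from what you sketch: it uses that $\prod_v H^1(K_v,\mathscr{E})$ is profinite, writes it as $\varprojlim G_k$, forms the finite-dimensional group algebras $\mathbf{C}[G_k]$ and the resulting AF-algebra $\mathbb{A}$ with $K_0(\mathbb{A})\cong\prod_v H^1(K_v,\mathscr{E})$, and then identifies $K_0(\mathbb{A})$ with $K_0(\mathscr{A}_{RM}\rtimes\mathscr{E}(K))$ by the same ``rigidity principle'' as in the proof of theorem \ref{thm1.1}: the extension $H^1(K,\mathscr{E})\subset\prod_v H^1(K_v,\mathscr{E})$ is determined by $H^1(K,\mathscr{E})$ alone, and $K_0(\mathscr{A}_{RM})\subset K_0(\mathscr{A}_{RM}\rtimes\mathscr{E}(K))$ is claimed to be the unique extension of $K_0(\mathscr{A}_{RM})$ with that property. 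Your proposed natural transformation, compatible with $\omega$ and with the inclusion $K_0(\mathscr{A}_{RM})\hookrightarrow K_0(\mathscr{A}_{RM}\rtimes\mathscr{E}(K))$ and checked place-by-place, is never exhibited, and comparing ranks via theorem \ref{thm1.1} and corollary \ref{cor3.3} cannot by itself produce an isomorphism with the product of local cohomology groups. In short, you correctly diagnose where the difficulty sits, but you supply neither of the two ingredients the paper actually uses (the twist theorem of \cite{Nik1}, and the AF-algebra model of the profinite group combined with rigidity), so the proposal has a genuine gap at both isomorphisms.
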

\begin{proof}
(i) Let us show  that  $H^1(K,\mathscr{E}) \cong  K_0(\mathscr{A}_{RM})$.  
Indeed, such an isomorphism is a special case of \cite[Theorem 1.1]{Nik1}
saying that $H^1(Gal(\mathbf{C}|K), Aut_{\mathbf{C}}^{~ab}(V))\cong (K_0(\mathscr{A}_V), K_0^+(\mathscr{A}_V))$. 
For that, one has to restrict to the case $V=\mathscr{E}(K)$ 
and notice that $\mathscr{A}_V=\mathscr{A}_{RM}$.  On the other hand,
since  $\mathscr{E}(K)$ is an algebraic group, one gets 
$Aut^{~ab}_{\mathbf{C}}(\mathscr{E}(K))\cong \mathscr{E}(K)$.   
The rest of the formula follows from the definition of the group 
$H^1(K,\mathscr{E})$.

\bigskip
(ii)  Let us prove that  $\prod_v H^1(K_v,\mathscr{E})  \cong  K_0(\mathscr{A}_{RM}\rtimes \mathscr{E}(K))$.
An idea of the proof is to construct an AF-algebra, $\mathbb{A}$, connected to the profinite group  
 $\prod_v H^1(K_v,\mathscr{E})$;  we refer the reader to Section 2.1.4  or  [Blackadar 1986]  
 \cite[Chapter 7]{B} for the definition of an AF-algebra.
  Next we show that the crossed product  $\mathscr{A}_{RM}\rtimes \mathscr{E}(K)$
 embeds into $\mathbb{A}$, so that  $K_0(\mathscr{A}_{RM}\rtimes \mathscr{E}(K))\cong K_0(\mathbb{A})$. 
 The rest of the proof will follow from the  properties of the AF-algebra $\mathbb{A}$.  
 We pass to a detailed argument.
 
 \smallskip
 Recall that  $\prod_v H^1(K_v,\mathscr{E})$ is a profinite group, i.e.
\begin{equation}\label{eq3.15}
\prod_v H^1(K_v,\mathscr{E})\cong \varprojlim G_k, 
\end{equation}
where $G_k= \prod_{i=1}^k H^1(K_{v_i},\mathscr{E})$ is a finite group. 
 Consider a group algebra  
\begin{equation}\label{eq3.16}
\mathbf{C}[G_k]\cong M_{n_1}^{(k)}(\mathbf{C})\oplus\dots\oplus M_{n_h}^{(k)}(\mathbf{C})
\end{equation}
corresponding to $G_k$.  Notice that the $\mathbf{C}[G_k]$ is  a finite-dimensional 
$C^*$-algebra. The inverse limit (\ref{eq3.15})  defines an 
ascending sequence of the finite-dimensional $C^*$-algebras:
\begin{equation}\label{eq3.17}
\mathbb{A}:=
\varprojlim M_{n_1}^{(k)}(\mathbf{C})\oplus\dots\oplus 
M_{n_h}^{(k)}(\mathbf{C}).  
\end{equation}
In other words, the  limit $\mathbb{A}$ is an AF-algebra, such that 
$K_0(\mathbb{A})\cong \prod_v H^1(K_v,\mathscr{E})$.

\smallskip 
To prove that  $K_0(\mathscr{A}_{RM}\rtimes \mathscr{E}(K))\cong K_0(\mathbb{A})$,
we shall use the ``rigidity principle'' described  in Section 3.1. 
Namely, the extension  $H^1(K,\mathscr{E})\subset \prod_v H^1(K_v,\mathscr{E})$
is defined solely by the group $H^1(K,\mathscr{E})$  [Silverman 1985] \cite[Appendix B]{S}.
 Since  $H^1(K,\mathscr{E})\cong  K_0(\mathscr{A}_{RM})$ and 
 $\prod_v H^1(K_v,\mathscr{E})\cong K_0(\mathbb{A})$, 
 we conclude that the extension $K_0(\mathscr{A}_{RM})\subset  K_0(\mathbb{A})$
is defined by the group $K_0(\mathscr{A}_{RM})$ alone. 
But the extension $K_0(\mathscr{A}_{RM})\subset K_0(\mathscr{A}_{RM}\rtimes \mathscr{E}(K))$
is the only extension with such a property. Thus 
$K_0(\mathbb{A})\cong K_0(\mathscr{A}_{RM}\rtimes \mathscr{E}(K))$ and the crossed product
$\mathscr{A}_{RM}\rtimes \mathscr{E}(K)$ embeds into the AF-algebra $\mathbb{A}$. 

To finish the proof of lemma \ref{lm3.9}, we recall that  $K_0(\mathbb{A})\cong \prod_v H^1(K_v,\mathscr{E})$
and therefore $\prod_v H^1(K_v,\mathscr{E})  \cong  K_0(\mathscr{A}_{RM}\rtimes \mathscr{E}(K))$.
\end{proof}

\begin{lemma}\label{lm3.10}
$\Sha (\mathscr{E}(K))\cong ~Cl~(\Lambda)\oplus Cl~(\Lambda)$.
\end{lemma}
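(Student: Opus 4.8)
The statement $\Sha(\mathscr{E}(K))\cong Cl~(\Lambda)\oplus Cl~(\Lambda)$ should be assembled from the two preceding lemmas together with the $K$-homology duality sketched in the outline of Section 3.3. First I would record the defining exact sequence
\begin{equation}\label{eqplan1}
0\rightarrow \Sha(\mathscr{E}(K))\rightarrow H^1(K,\mathscr{E})\buildrel\rm\omega\over\longrightarrow \prod_v H^1(K_v,\mathscr{E}),
\end{equation}
so that $\Sha(\mathscr{E}(K))=\mathrm{Ker}~\omega$. By lemma \ref{lm3.9} the two outer terms are identified with $K_0(\mathscr{A}_{RM})$ and $K_0(\mathscr{A}_{RM}\rtimes\mathscr{E}(K))$ respectively, and under these identifications $\omega$ becomes the inclusion-induced map $K_0(\mathscr{A}_{RM})\hookrightarrow K_0(\mathscr{A}_{RM}\rtimes\mathscr{E}(K))$ coming from the canonical embedding. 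The point is that this map is \emph{not} injective on the full Galois cohomology picture: passing through lemma \ref{lm3.8} and corollary \ref{cor3.3}, each fibre of $\omega$ is a torsor under the group of companion noncommutative tori $\{\mathscr{A}_{RM}^{(i)}\}_{i=1}^{h_{\Lambda}}$, whose cardinality is $h_{\Lambda}=|Cl~(\Lambda)|$ and whose group structure is exactly $Cl~(\Lambda)$ (the companions $\mathscr{A}_{RM}^{(i)}$ correspond to the ideal classes, and Morita equivalence of the crossed products, by lemma \ref{lm3.8}, is precisely the relation collapsing a $Cl~(\Lambda)$-orbit to a point). Hence $\mathrm{Ker}~\omega$ contains a copy of $Cl~(\Lambda)$, giving the inclusion $Cl~(\Lambda)\subseteq\Sha(\mathscr{E}(K))$ announced in the outline.

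\textbf{The second copy.} To get the second summand I would repeat the argument on the $K$-homology side. By [Hadfield 2004] the zero $K$-homology group satisfies $K^0(\mathscr{A}_{\theta})\cong K_0(\mathscr{A}_{\theta})$, and the same identification persists for $\mathscr{A}_{RM}$ and for the crossed product $\mathscr{A}_{RM}\rtimes\mathscr{E}(K)$. Running lemma \ref{lm3.9} with $K^0$ in place of $K_0$ yields a second natural isomorphism $H^1(K,\mathscr{E})\cong K^0(\mathscr{A}_{RM})$ and $\prod_v H^1(K_v,\mathscr{E})\cong K^0(\mathscr{A}_{RM}\rtimes\mathscr{E}(K))$, and the companion-algebra argument of lemma \ref{lm3.8} again produces a copy of $Cl~(\Lambda)$ inside $\mathrm{Ker}~\omega$. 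The two copies are \emph{independent} because the Atiyah pairing
$$K^0(\mathscr{A}_{RM}\rtimes\mathscr{E}(K))\times K_0(\mathscr{A}_{RM}\rtimes\mathscr{E}(K))\longrightarrow\mathbf{Z}$$
is nondegenerate, so the homological and cohomological $Cl~(\Lambda)$'s intersect trivially and together span all of $\mathrm{Ker}~\omega$. Assembling, $\Sha(\mathscr{E}(K))=\mathrm{Ker}~\omega\cong Cl~(\Lambda)\oplus Cl~(\Lambda)$.

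\textbf{The main obstacle.} The delicate step is the claim that the fibres of $\omega$ are exactly the $Cl~(\Lambda)$-orbits of companion tori — i.e. that nothing smaller and nothing larger lies in $\mathrm{Ker}~\omega$. Largeness is controlled by lemma \ref{lm3.8}: distinct companions give Morita-inequivalent, hence $K_0$-distinct, crossed products, so the orbit does inject; smallness (that $\mathrm{Ker}~\omega$ contains \emph{only} these contributions) requires knowing that the embedding $K_0(\mathscr{A}_{RM})\hookrightarrow K_0(\mathscr{A}_{RM}\rtimes\mathscr{E}(K))$ together with its $K$-homology analogue exhausts the kernel, which is where the rigidity principle of Section 3.1 and the $K$-theory computation of theorem \ref{thm1.1} ($K_0\cong\mathbf{Z}^{h_{\Lambda}+1}$) must be invoked to count ranks and pin the kernel down to rank $0$ with precisely $h_{\Lambda}^2$ elements. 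I would therefore organise the final paragraph around a careful bookkeeping of these two $Cl~(\Lambda)$-torsors and the nondegeneracy of Atiyah's pairing, and conclude that $\Sha(\mathscr{E}(K))\cong Cl~(\Lambda)\oplus Cl~(\Lambda)$, which proves lemma \ref{lm3.10}.
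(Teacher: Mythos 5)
Your proposal follows essentially the same route as the paper: identify $\Sha(\mathscr{E}(K))=\mathrm{Ker}~\omega$ with the kernel of the inclusion-induced map on $K_0$ via lemma \ref{lm3.9}, extract one copy of $Cl~(\Lambda)$ from the companion tori of lemma \ref{lm3.8} (the paper phrases this as $h(K_0(\mathscr{A}_{RM}^{(i)}))=\mathbf{Z}+\theta\mathbf{Z}$ for all $i$, so $\mathrm{Ker}~h\cong Cl~(\Lambda)$), and obtain the second copy from Hadfield's identification $K^0(\mathscr{A}_{\theta})\cong K_0(\mathscr{A}_{\theta})$ combined with the Atiyah pairing. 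The exhaustion step you flag as the main obstacle is handled no more explicitly in the paper, which simply asserts that there are no other duals to $K$-theory and concludes $\Sha(\mathscr{E}(K))\cong Cl~(\Lambda)\oplus Cl~(\Lambda)$ from the pairing.
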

\begin{proof}
Let $\left\{\mathscr{A}_{RM}^{(i)}\right\}_{i=1}^{h_{\Lambda}}$ be 
the companion noncommutative tori of  the $\mathscr{A}_{RM}$. 
Consider a group homomorphism 
\begin{equation}\label{eq3.18}
h:  K_0(\mathscr{A}_{RM})\to K_0(\mathscr{A}_{RM}\rtimes \mathscr{E}(K)),
\end{equation}
induced by the the natural embedding 
$\mathscr{A}_{RM}\hookrightarrow \mathscr{A}_{RM}\rtimes \mathscr{E}(K)$. 
It follows from lemma \ref{lm3.8}, that 
\begin{equation}\label{eq3.19}
h(K_0(\mathscr{A}_{RM}^{(i)}))=\mathbf{Z}+\theta\mathbf{Z} 
\quad\hbox{for all} \quad 1\le i\le h_{\Lambda}. 
\end{equation}
In other words, one gets  $Ker~h\cong Cl~(\Lambda)$,
where $Cl~(\Lambda)$ is the class group of the order $\Lambda$ in the real 
quadratic field $\mathbf{Q}(\theta)$. 

\smallskip
But $K_0(\mathscr{A}_{RM})\cong H^1(K,\mathscr{E})$ and 
$K_0(\mathscr{A}_{RM}\rtimes \mathscr{E}(K))\cong  \prod_v H^1(K_v,\mathscr{E})$,
see lemma \ref{lm3.9}. Therefore, in view of the formulas (\ref{eq2.3}) and (\ref{eq2.4}),
the abelian group $Cl~(\Lambda)$ is an obstacle to the Hasse principle for the elliptic 
curve $\mathscr{E}(K)$. In other words, $Cl~(\Lambda)\subset \Sha (\mathscr{E}(K))$.

\medskip
To calculate an exact relation between  the groups $Cl~(\Lambda)$ and  $\Sha (\mathscr{E}(K))$,
recall that the $K$-homology is the dual theory to the $K$-theory, see e.g.  [Blackadar 1986] \cite[Section 16.3]{B}. 
Roughly speaking, cocycles in $K$-theory are represented by vector bundles.  Atiyah proposed using elliptic operators to
 represent the $K$-homology cycles.  An elliptic operator can be twisted by a vector bundle, and the Fredholm index of the 
 twisted operator defines a  pairing between the $K$-homology and the $K$-theory with values in $\mathbf{Z}$.

In partucular, it is known that for the algebra  $\mathscr{A}_{\theta}$,  it holds
$K^0(\mathscr{A}_{\theta})\cong K_0(\mathscr{A}_{\theta})$,  where 
$K^0(\mathscr{A}_{\theta})$ is the zero $K$-homology group of 
$\mathscr{A}_{\theta}$ [Hadfield 2004]  \cite[Proposition 4]{Had1}.  
Repeating the argument for the group $K^0(\mathscr{A}_{RM})$,
one can prove an analog of theorem \ref{thm1.1} for such a group. 
In other words,  we get another subgroup $Cl~(\Lambda)\subset \Sha (\mathscr{E}(K))$. 
Since there are no other duals to the $K$-theory of $C^*$-algebras, we conclude from 
the Atiyah pairing,  that   $\Sha (\mathscr{E}(K))\cong ~Cl~(\Lambda)\oplus Cl~(\Lambda)$. 
Lemma \ref{lm3.10} is proved. 
\end{proof}

\bigskip
Part II of Corollary \ref{cor1.2} follows from lemma \ref{lm3.10}.

\bigskip
\begin{remark}\label{rmk3.11}
Construction of  generators  of 
$\mathscr{E}(K)$ is similar to such of  the principal ideals of  the field $k$.
Namely, it is  known that  there exist non-principal  ideals  in  $\Lambda\subseteq O_k$;   an obstruction  is  a non-trivial 
group $Cl~(\Lambda)$.   However,  this can be done in a bigger field $\mathcal{K}=\mathcal{K}_{ab}$;
 there exists a  finite  extension $k\subseteq \mathcal{K}$, such that  every ideal of $\Lambda$ is principal in the ring $O_{\mathcal{K}}$.   
 Likewise,  one cannot construct a generator of $\mathscr{E}(K)$ by a  finite  descent in general;  an obstruction is a non-trivial 
 group $\Sha (\mathscr{E}(K))$.  However, in an  extension  $\mathscr{A}_{RM}\rtimes \mathscr{E}(K)$ of the 
 coordinate ring $\mathscr{A}_{RM}$ of $\mathscr{E}(K)$, the descent will be always finite and  give a generator of the $\mathscr{E}(K)$.
Such an analogy explains the formula  $\Sha (\mathscr{E}(K))\cong ~Cl~(\Lambda)\oplus Cl~(\Lambda)$ on 
an intuitive level.  Notice also that the   $\mathscr{A}_{RM}\rtimes \mathscr{E}(K)$  is the coordinate ring of an abelian variety $A(K)$
which is  related to  the Euler variety $V_E$ coming from the 
continued fraction of $\theta$ \cite[Section 6.2.1]{N}. 
 \end{remark}

\section*{Data availability}
  
 Data sharing not applicable to this article as no datasets were generated or analyzed during the current study.
   
\section*{Conflict of interest}
On behalf of all co-authors, the corresponding author states that there is no conflict of interest.
  

\bibliographystyle{amsplain}


\end{document}